\theoremstyle{plain}
\newtheorem{thm}[subsection]{Theorem}
\newtheorem{prop}[subsection]{Proposition}
\theoremstyle{definition}
\newtheorem{defn}[subsection]{Definition}
\theoremstyle{remark}
\newtheorem{rem}[subsection]{Remark}
\let\c@equation\c@subsection
\newcommand{\ZZ}{{ \mathbb{Z} }}
\newcommand{\tT}{{\tilde{T}}}
\newcommand{\tK}{{\tilde{K}}}
\newcommand{\capO}{{ \mathcal{O} }}
\newcommand{\ev}{{ \mathrm{ev} }}
\newcommand{\id}{{ \mathrm{id} }}
\newcommand{\op}{{ \mathrm{op} }}
\newcommand{\Space}{{ \mathsf{S} }}
\newcommand{\Ho}{{ \mathsf{Ho} }}
\newcommand{\sSet}{{ \mathsf{sSet} }}
\newcommand{\Mod}{{ \mathsf{Mod} }}
\newcommand{\M}{{ \mathsf{M} }}
\newcommand{\Alg}{{ \mathsf{Alg} }}
\newcommand{\TQ}{{ \mathsf{TQ} }}
\newcommand{\coAlg}{{ \mathsf{coAlg} }}
\newcommand{\res}{{ \mathsf{res} }}
\newcommand{\CGHaus}{{ \mathsf{CGHaus} }}
\newcommand{\AlgO}{{ \Alg_\capO }}
\newcommand{\Loop}{{ \Omega }}
\newcommand{\Loopt}{{ \tilde{\Omega} }}
\newcommand{\Sigmat}{{ \tilde{\Sigma} }}
\newcommand{\Susp}{{ \Sigma }}
\newcommand{\wequiv}{{ \ \simeq \ }}
\newcommand{\iso}{{ \cong }}
\DeclareMathOperator*{\hocolim}{hocolim}
\DeclareMathOperator*{\colim}{colim}
\DeclareMathOperator*{\holim}{holim}
\DeclareMathOperator{\Hombold}{\mathbf{Hom}}
\DeclareMathOperator{\Map}{Map}
\DeclareMathOperator{\End}{End}
\DeclareMathOperator{\BAR}{Bar}
\DeclareMathOperator{\Cobar}{Cobar}
\DeclareMathOperator{\Tot}{Tot}
\DeclareMathOperator{\im}{im}
\title[Iterated delooping and desuspension]{Iterated delooping and desuspension of structured ring spectra}
\author{Jacobson R. Blomquist}
\address{Department of Mathematical Sciences, 4400 Vestal Parkway E, Binghamton, New York 13902, USA}
\email{blomquist@math.binghamton.edu}
\begin{document}

\begin{abstract}

We study completion with respect to the iterated suspension functor on $\capO$-algebras, where $\capO$ is a reduced operad in symmetric spectra. This completion is the unit of a derived adjunction comparing $\capO$-algebras with coalgebras over the associated iterated suspension-loop homotopical comonad via the iterated suspension functor. We prove that this derived adjunction becomes a derived equivalence when restricted to 0-connected $\capO$-algebras and $r$-connected $\Sigmat^r \Loopt^r$-coalgebras. We also consider the dual picture, using iterated loops to build a cocompletion map from algebras over the iterated loop-suspension homotopical monad to $\capO$-algebras. This is the counit of a derived adjunction, which we prove is a derived equivalence when restricting to $r$-connected $\capO$-algebras and $0$-connected $\Loopt^r \Sigmat^r$-algebras.
\end{abstract}

\maketitle

\section{Introduction}



Homotopy groups are a powerful but difficult to work with invariant, so often throwing away information and focusing on homology is helpful because of the relative ease of computation. Because of this it is often very useful to have ways of comparing homotopy and homology groups. One such tool is Bousfield-Kan completion with respect to a ring, which essentially only sees the homology information of the space with respect to the given ring. It can be built out of iterating the integral chains functor and gluing everything together. The classical cases (Sullivan \cite{Sullivan_genetics}, Bousfield-Kan \cite{Bousfield_Kan}) were to use $R=\mathbb{F}_p$ or $R=\mathbb{Q}$ and these fit into a fracture square result where the space can be recovered out of a pullback of the $p$-completions and the rational completion. In the case of completing with respect to $\ZZ$, if the space is sufficiently nice (for example 1-connected) then the completion recovers the space up to homotopy equivalence \cite{Bousfield_Kan}. For a useful introduction to simplicial localization methods, see for instance \cite{Jardine} \cite{Goerss_Jardine}.

But this raises the question, how does gluing together homology information recover the homotopy type? The key is in realizing that homology carries more information than just the sequence of groups. The cosimplicial structure of the homology resolution encodes the data of being a coalgebra over the associated homology comonad, and the cosimplicial identities correspond to the coassociativity and counit axioms; the cobar is encoding that ``all diagrams'' involving these coalgebraic structure maps commute. The idea then is to study the category of colgebras over the associated comonad and see if this captures the entire homotopy category of spaces, after restricting to appropriate connectivity.

But the completion construction is fairly general, the next step is to ask what other comparison maps might work in place of the spaces level Hurewicz map to give interesting completions. Carlsson \cite{Carlsson_equivariant}, and later Arone-Kankaanrinta \cite{Arone_Kankaanrinta} prove an equivalence built out of the completion of a space with respect to stabilization $\id \to \Loopt^\infty \Sigma^\infty$ analogous to the Bousfield-Kan result. Hopkins \cite{Hopkins_iterated_suspension} and later Bousfield \cite{Bousfield_cosimplicial_space} prove the analogous equivalence coming from completion with respect to finite loop-suspension $\id \to \Loopt^r \Sigma^r$. Harper-Hess \cite{Harper_Hess} and then Ching-Harper \cite{Ching_Harper_derived_Koszul_duality} did the same for the analog to the homology case in the setting of $\capO$-algebras. In that case stabilization and $\TQ$-homology agree. The goal of this work is to consider completion in the finite suspension case in $\AlgO$ and show that it also fits into a derived equivalence, as well as analogous results for its dual, the cocompletion and how that relates to iterated loops.

To make these constructions homotopically meaningful in the context of $\capO$-algebras, we fatten things up between every step of the resolution, and the replacements we weave into the resolution need to be particularly well behaved. We work with homotopy (co)algebras and extend techniques to that context, showing in particular that we still have a proper (co)bar construction. For more about these ideas, see Blumberg-Riehl \cite{Blumberg_Riehl}.


To be able to talk about these (co)algebra categories representing homotopy theories, we need some notion of a homotopy theory on them. For $\AlgO$ this is just the one arising from the positive flat stable model structure. In the case of algebras we build this homotopy theory by hand using the box product to construct a topological $A_\infty$-category whose objects are algebras over the homotopical monad associated to loop-suspension and then take path components of the mapping spaces for the maps in the homotopy category. It is dual to work of Arone-Ching \cite{Arone_Ching_classification}.

\subsection{The (dual) Freudenthal suspension map}

From now on because we're in $\AlgO$, to keep everything homotopically meaningful we have to insert fibrant and cofibrant replacements, and for notational convenience will use $F$ for a simplicial fibrant replacement monad, $C$ for a simplicial cofibrant replacement comonad\cite{Blumberg_Riehl}, $\Loopt^r:=\Omega^r F$ and $\Sigmat^r:=\Sigma^rC$.

For a $k$-connected $\AlgO$ $X$, we have a result analogous to the Freudenthal suspension theorem for spaces which says that the composite
\begin{align}
CX \xrightarrow{m\id} CCX \xrightarrow{\id \eta \id^2} C\Loop^r \Sigmat^rX \xrightarrow{\id^2 \eta \id^3} C\Loopt^r\Sigmat^rX
\end{align}
of the comultiplication on $C$, followed by the unit on $\Loop^r\Sigma^r$, followed by the unit on $F$, is $(2k+2)$-connected. In the final chapter we use the classical spaces level version of this theorem to understand the homotopy theory of iterated suspension spaces, and as there is a version of this for spectra, we can do the same thing there.

And if we want to understand delooping, a good place to start would be the dual map
\begin{align*}
  F\Sigmat^r\Loopt^r X\xrightarrow{\id^2 \nu \id^3} F \Sigma^r \Loopt^r X \xrightarrow{\id \nu \id^2} FFX \xrightarrow{\mu \id} FX
\end{align*}
which is the counit on $C$ followed by the counit on $\Sigma^r\Loop^r$ followed by the multiplication on $F$. In \ref{dfsrs} we show that the connectivity of the above map is $2k+3-r$ for $X$ $k$-connected.

\subsection{Iterating the (dual) Freudenthal suspension map} Once we have maps like the ones above the natural thing to do is iterate them to form a (co)simplicial resolution of $X$. Compare with \cite{Bousfield_Kan}

Freudenthal
\begin{align}
\label{eq:freudenthal_resolution}
\xymatrix{
  CX \ar[r] & 
 C\Loopt^r  \Sigmat^rX \ar@<0.5ex>[r]\ar@<-0.5ex>[r] &
  C(\Loopt^r  \Sigmat^r)^2X \ar@<1.0ex>[r]\ar@<-1.0ex>[r]\ar@<0.0ex>[r]&
  C(\Loopt^r  \Sigmat^r)^3X \cdots
}  
\end{align}

Dual Freudenthal
\begin{align}\label{eq:dual_freudenthal_resolution}
\xymatrix{
  FX & 
F  \Sigmat^r\Loopt^r X\ar[l] & 
  F(\Sigmat^r\Loopt^r)^2 X\ar@<0.5ex>[l]\ar@<-0.5ex>[l] &
  F(\Sigmat^r\Loopt^r)^3\ar@<1.0ex>[l]\ar@<-1.0ex>[l]\ar@<0.0ex>[l] X\cdots
}  
\end{align}
We should think of the (co)simplicial structure as encoding the (co)-operations of the (co)monad. Bousfield \cite{Bousfield_cosimplicial_space} studied the spaces level version of \ref{eq:freudenthal_resolution}, the homotopy limit of which produces the $\Loopt^r \Sigmat^r$-completion map, analogous to the $\ZZ$-completion studied in \cite{Bousfield_Kan}.

\subsection{The Main Results}

In this section we will need Remarks \ref{frakC} and \ref{frakB} to understand the definitions of the cobar and bar constructions required for the following theorems.

\begin{thm}\label{dualmainthm}
The derived adjunction of the form
\begin{align}
  \Map_{\coAlg_{\Sigmat^r\Loopt^r}}(\Sigmat^rX, Y)\wequiv
  \Map_{\AlgO}(X, \holim\nolimits_{\Delta}\mathfrak{C}(Y))
\end{align}
induces an equivalence of homotopy categories, after restriction to the full subcategories of $0$-connected $\AlgO$ and $r$-connected $\Sigmat^r\Loopt^r$-coalgebras. More precisely:
\begin{itemize}
\item[(a)] 
$X\xrightarrow{\wequiv}\holim_{\Delta}\mathfrak{C}(\Sigmat^rX)$
\item[(b)]
$ \Sigmat^r \holim_{\Delta}\mathfrak{C}( Y)\xrightarrow{\wequiv} Y$
\end{itemize}

\end{thm}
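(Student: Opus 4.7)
The plan is to adapt the strategy of Ching--Harper~\cite{Ching_Harper_derived_Koszul_duality} for their stabilization-based derived Koszul duality, replacing the stabilization connectivity estimates with the iterated Freudenthal and dual Freudenthal estimates recalled in the introduction. Parts (a) and (b) will be proved by parallel connectivity arguments on a Bousfield--Kan-style tower of partial totalizations of the cosimplicial cobar construction $\mathfrak{C}$.

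For part (a), I would first identify $\holim_{\Delta}\mathfrak{C}(\Sigmat^r X)$ with the inverse limit of the partial totalizations $\Tot^s\mathfrak{C}(\Sigmat^r X)$ and then analyze the homotopy fiber of the natural map $X \to \Tot^s\mathfrak{C}(\Sigmat^r X)$ for each $s$. Since $\mathfrak{C}(\Sigmat^r X)^n$ is, up to the usual fibrant/cofibrant decorations, a shift of $(\Loopt^r\Sigmat^r)^{n+1} X$, the Freudenthal estimate (which gives the augmentation $X \to \Loopt^r\Sigmat^r X$ as $(2k+2)$-connected for $k$-connected $X$) controls the connectivity of the map into each matching object, and therefore the connectivity of the fibers $\Tot^s \to \Tot^{s-1}$. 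Iterating this estimate starting from a $0$-connected $X$ produces connectivity growing roughly like $2^s$, so the fibers eventually become arbitrarily highly connected and the map $X \xrightarrow{} \holim_{\Delta}\mathfrak{C}(\Sigmat^r X)$ is a weak equivalence in the limit.

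For part (b), the argument is dual. The counit map $\Sigmat^r\holim_{\Delta}\mathfrak{C}(Y) \to Y$ is filtered by the tower of partial totalizations of $\mathfrak{C}(Y)$, and the dual Freudenthal estimate from \ref{dfsrs} (giving $(2k+3-r)$-connectivity of the counit-like map $F\Sigmat^r\Loopt^r X \to FX$ for $k$-connected $X$) is applied successively. Because $Y$ is $r$-connected, the relevant input $\Loopt^r Y$ to the tower is effectively $0$-connected, so iterating the dual Freudenthal estimate again drives the fibers between consecutive stages of the tower to arbitrarily high connectivity, yielding the equivalence in the limit. A minor but important point here is handling the interaction of $\Sigmat^r$ with the finite $\Tot^s$: at each finite stage, $\Sigmat^r\Tot^s\mathfrak{C}(Y) \to Y$ can be matched (up to a controlled error) with the evaluation of an iterated loop-suspension comparison on $Y$, and this controlled error is precisely what the dual Freudenthal estimate bounds.

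The main obstacle I expect is not the connectivity bookkeeping, which is essentially mechanical once the estimates from the introduction are in hand, but the foundational setup on the coalgebra side: verifying that $\mathfrak{C}(-)$ is a proper cobar construction in the sense needed to compute $\holim_{\Delta}$ correctly, checking that the homotopy mapping spaces $\Map_{\coAlg_{\Sigmat^r\Loopt^r}}(-,-)$ built by hand from the box product genuinely realize the derived adjunction displayed in the theorem, and, for part (b), controlling the failure of $\Sigmat^r$ to commute strictly with the homotopy limit. Once these foundational points are settled, the iterated (dual) Freudenthal estimates close the argument by standard Tot-tower techniques.
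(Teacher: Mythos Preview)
Your overall Tot-tower strategy is the right shape, but two concrete steps do not go through as written.

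First, the claim that iterating the base Freudenthal estimate on the cosimplicial levels produces connectivity ``growing roughly like $2^s$'' is not correct. The objects $(\Loopt^r\Sigmat^r)^{n+1}X$ remain only $k$-connected when $X$ is $k$-connected, so reapplying the $1$-cube Freudenthal bound to them does not bootstrap. What actually controls the fiber of $\Tot^{s}\to\Tot^{s-1}$ is the total homotopy fiber of an $(s+1)$-cube built from iterating the Freudenthal map, and the scalar estimate gives no grip on that iterated fiber. The paper supplies the missing ingredient: a \emph{higher} Freudenthal suspension theorem for cubes (Theorem~\ref{hfsrs}), proved via higher Blakers--Massey by comparing $\mathcal{X}$ to the cube concentrated on its total homotopy cofiber. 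It says that if an $n$-cube $\mathcal{X}$ is $(\id+1)(k+1)$-cartesian then so is the $(n+1)$-cube $\mathcal{X}\to\Loopt^r\Sigmat^r\mathcal{X}$. This yields \emph{linear} growth in $n$ of the cartesianness of the iterated Freudenthal cube, which is what drives the Tot-tower convergence; it is not deducible from the $n=0$ case by the iteration you describe.

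Second, for part (b) the paper's argument is not the dual of (a) and does not use the dual Freudenthal estimate~\ref{dfsrs} at all. The key observation is that $\Sigmat^r\mathfrak{C}(Y)\iso\Cobar(\Sigmat^r\Loopt^r,\Sigmat^r\Loopt^r,Y)$ acquires an extra codegeneracy, so $\holim_\Delta\Sigmat^r\mathfrak{C}(Y)\to Y$ is an equivalence for free. All of the analytic work then goes into the step you flagged as ``minor'': showing $\Sigmat^r\holim_\Delta\mathfrak{C}(Y)\to\holim_\Delta\Sigmat^r\mathfrak{C}(Y)$ is an equivalence. This is done by a separate cube-level estimate (Theorem~\ref{sigmaestimates}), which again feeds the higher Freudenthal theorem \ref{hfsrs} into higher (dual) Blakers--Massey to bound the cartesianness of $\Sigmat^r$ applied to the canonical $(n+1)$-cube over $\mathfrak{C}(Y)$. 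Your proposal of matching $\Sigmat^r\Tot^s\mathfrak{C}(Y)\to Y$ against an iterated dual Freudenthal comparison on $Y$ does not produce the quantity that needs to be bounded; the dual Freudenthal estimate is the engine for Theorem~\ref{mainthm}, not for this one.
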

\begin{thm}\label{mainthm}
The derived adjunction of the form
\begin{align}
  \Map_{\Alg_{\Loopt^r\Susp^r}}(X,\Loopt^r Y)\wequiv
  \Map_{\AlgO}(\hocolim\nolimits_{\Delta^\op}\mathfrak{B}(X),Y)
\end{align}
induces an equivalence of homotopy categories, after restriction to the full subcategories of $r$-connected $\AlgO$ and $0$-connected $\Loopt^r\Sigmat^r$-algebras. More precisely:
\begin{itemize}
\item[(a)] $  X\xrightarrow{\wequiv}\Loopt^r\hocolim_{\Delta^\op}\mathfrak{B}(X)$
\item[(b)]
$\hocolim_{\Delta^\op}\mathfrak{B}(\Loopt^r Y)\xrightarrow{\wequiv} Y$
\end{itemize}

\end{thm}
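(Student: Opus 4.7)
The plan is to dualize the proof of Theorem \ref{dualmainthm}, replacing its cosimplicial $\holim$ analysis with a simplicial $\hocolim$ analysis and substituting the dual Freudenthal connectivity estimate from \ref{dfsrs} for the Freudenthal estimate. First I would establish that the simplicial bar object $\mathfrak{B}(X)$ is Reedy cofibrant (proper) in $\AlgO$ whenever $X$ is suitably cofibrant in $\Alg_{\Loopt^r\Susp^r}$, so that $\hocolim_{\Delta^\op}\mathfrak{B}(X)$ is modeled by the geometric realization and admits the standard skeletal filtration by partial realizations $|\mathfrak{B}(X)|_n$.

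For part (b), given an $r$-connected $\AlgO$ object $Y$, the plan is to analyze the augmented simplicial object $\mathfrak{B}(\Loopt^r Y) \to Y$ skeleton by skeleton. Each face operator is built from the $\Sigmat^r\Loopt^r$-counit, so the augmentation is governed by iterates of the dual Freudenthal map. From \ref{dfsrs}, if $Y$ is $k$-connected then $\Sigmat^r\Loopt^r Y \to Y$ is $(2k+3-r)$-connected; starting at $k=r$ and iterating along the skeletal filtration, the connectivity of the cofiber of $|\mathfrak{B}(\Loopt^r Y)|_n \to Y$ grows without bound in $n$, which yields the claimed weak equivalence.

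For part (a), given a $0$-connected $\Loopt^r\Susp^r$-algebra $X$, I would use the derived $(\Sigmat^r, \Loopt^r)$-adjunction developed earlier in the paper to reduce the unit map $X \to \Loopt^r\hocolim_{\Delta^\op}\mathfrak{B}(X)$ to a connectivity comparison of the same shape as in (b); the $0$-connectedness hypothesis is calibrated so that after applying $\Sigmat^r$ one lands in the $r$-connected range of $\AlgO$ where the dual Freudenthal estimate actually increases connectivity. Again appealing to \ref{dfsrs}, each stage of the skeletal filtration contributes a latching cofiber whose connectivity is controlled by iterated dual Freudenthal estimates, giving a weak equivalence on $0$-connected $X$.

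The main obstacle will be the convergence of the skeletal filtration in the simplicial ($\hocolim$) setting. In Theorem \ref{dualmainthm} the $\holim$ is assembled from a tower of fibrations whose fiber-by-fiber connectivity is delivered immediately by the Freudenthal estimate, so convergence is essentially automatic. Here the latching cofibers appearing in the skeletal filtration of the realization depend more delicately on iterated compositions of the dual Freudenthal map, and one must also verify that $\Loopt^r$ commutes with the relevant sequential $\hocolim$ in the appropriate connectivity range. Carefully bookkeeping these connectivity bounds, the simplicial analog of the Bousfield--Kan $\Tot$-tower convergence argument, is the key technical step and is where the asymmetric numerical constraints between the $r$-connected and $0$-connected hypotheses will get pinned down.
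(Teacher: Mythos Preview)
Your outline has the right shape---dualize Theorem~\ref{dualmainthm}, filter the realization, and use dual Freudenthal---but there are two genuine gaps where the paper does something you have not accounted for.

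First, citing only \ref{dfsrs} is not enough. The layers of the skeletal filtration of $|\mathfrak{B}(X)|$ (equivalently, the maps $\hocolim_{(\Delta^{\op})^{\le n}} \to \hocolim_{(\Delta^{\op})^{\le n+1}}$) are governed by the cocartesianness of the $(n+1)$-cubes obtained by iterating the dual Freudenthal map. Knowing the connectivity of a single $\Sigmat^r\Loopt^r Z \to Z$ does not, by itself, give uniform estimates on all faces of these cubes; naive iteration loses too much. The paper proves a \emph{higher} dual Freudenthal theorem (\ref{hdfsrs}): if an $n$-cube $\mathcal{X}$ is $((k+3-r)\id+k)$-cocartesian then so is the $(n+1)$-cube $\Sigmat^r\Loopt^r\mathcal{X}\to\mathcal{X}$. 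This is established by induction on $n$ using higher Blakers--Massey and higher dual Blakers--Massey for $\capO$-algebras, and it is the engine that makes the connectivities actually grow in $n$. Your ``iterating along the skeletal filtration'' step needs this cubical input, not just the $0$-cube case.

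Second, your plan for part~(a) is missing the structural step the paper uses. The paper does not reduce the unit map via the adjunction; instead it observes that $\Loopt^r\mathfrak{B}(X)\simeq\BAR(\Loopt^r\Sigmat^r,\Loopt^r\Sigmat^r,X)$ carries an \emph{extra degeneracy}, so $X\to\hocolim_{\Delta^{\op}}\Loopt^r\mathfrak{B}(X)$ is an equivalence by a finality argument. The entire content of (a) then reduces to showing $\Loopt^r$ commutes with the hocolim, and that is exactly where the cubical estimate (Theorem~\ref{estimates}, proved via \ref{hdfsrs} and higher Blakers--Massey) is spent: it shows the truncated comparison maps are $(3n+5)$-connected. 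You correctly flagged this commutation as the main obstacle, but the mechanism you propose (bookkeeping iterated \ref{dfsrs}) will not close it; you need the higher cubical theorem.
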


\subsection{Organization of the paper}

Section 2 gives the outline of the argument for the iterated suspension functor. In section 3 we prove the connectivity results required for the theorem. Section 4 gives the outline for the iterated loops result, and section 5 details the connectivity results required for this theorem. Section 6 contains some technical results required for the analysis to make everything homotopically meaningful. Section 7 shows how these techniques can also give an alternate proof to the equivalence of categories in \cite{Ching_Harper_derived_Koszul_duality} by calculating the appropriate connectivity estimates.

\subsection*{Acknowledgments}

The author would like to thank John E. Harper for consistent advice and countless helpful conversations. The author would also like to thank Cary Malkiewich for useful advice, Kathryn Hess for the very productive trip to EPFL, and Mark Johnson and Wojciech Dorabiala for stimulating discussion during his visit to Penn State Altoona. This research was supported in part by National Science Foundation grants DMS-1510640 and DMS-1547357.



\section{Proof of Theorem \ref{dualmainthm}}

This section is an outline of the proof of \ref{dualmainthm}.

\begin{thm}\label{lim1}
 For $X$ an $r$-connected $\Sigmat^r \Loopt^r$-coalgebra, $r>0$, $n \ge 0$ the map \begin{align} \Sigmat^r \holim_\Delta \mathfrak{C}(X) \to \Sigmat^r \holim_{\Delta^{\le n}} \mathfrak{C}(X) \end{align} is $(n+2+r)$-connected.
\end{thm}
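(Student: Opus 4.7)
The plan is a standard tower-of-fibrations argument combined with the Freudenthal-type connectivity estimates to be established in Section 3, with $\Sigmat^r$ contributing the additional $r$ in the final bound.

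First, since $\Sigmat^r$ raises the connectivity of a map of $\capO$-algebras by $r$ (suspension shifts homotopy groups up by $r$ in symmetric spectra and cofibrant replacement is a weak equivalence), it suffices to show that the unsuspended comparison
\begin{align*}
\holim\nolimits_\Delta \mathfrak{C}(X) \to \holim\nolimits_{\Delta^{\le n}} \mathfrak{C}(X)
\end{align*}
is $(n+2)$-connected. By Reedy fibrancy of $\mathfrak{C}(X)$, inherited from the built-in fibrant replacement $F$ used to define $\Loopt^r = \Loop^r F$, the tower $\{\holim_{\Delta^{\le m}}\mathfrak{C}(X)\}_m$ is a tower of fibrations whose stage-$m$ fiber is $\Loopt^m N^m\mathfrak{C}(X)$, where $N^m\mathfrak{C}(X)$ denotes the $m$-th matching fiber, i.e.\ the iterated total fiber of the coface cube landing at $\mathfrak{C}(X)^m = (\Loopt^r\Sigmat^r)^m\Loopt^r X$. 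The fiber of the comparison map is then the homotopy limit of these layers for $m > n$, and the usual Milnor short exact sequence bounds its connectivity in terms of the layer connectivities.

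Second, I estimate the connectivity of each $N^m\mathfrak{C}(X)$ using the Freudenthal estimate recalled in the introduction: for a $k$-connected $\capO$-algebra $Z$, the map $Z \to \Loopt^r\Sigmat^r Z$ is $(2k+2)$-connected. Since $X$ is $r$-connected with $r>0$, the base object $\Loopt^r X$ is at least $0$-connected, and each iterate $(\Loopt^r\Sigmat^r)^j\Loopt^r X$ remains at least $0$-connected. Each edge of the coface cube computing $N^m\mathfrak{C}(X)$ is then a Freudenthal-type map applied to such an iterate, and so is at least $2$-connected; an iterated (Blakers--Massey style) cubical connectivity argument then gives that $N^m\mathfrak{C}(X)$ is at least $(2m+2)$-connected, so the stage-$m$ layer $\Loopt^m N^m\mathfrak{C}(X)$ is at least $(m+2)$-connected.

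Combining, all layers for $m > n$ are at least $(n+3)$-connected, so the fiber of the unsuspended comparison is $(n+2)$-connected, and applying $\Sigmat^r$ yields the claimed $(n+2+r)$-connectivity. The main obstacle is the cubical connectivity step: extracting the $(2m+2)$-connectivity of the matching fiber from the Freudenthal estimate on each edge of the coface cube requires a careful iterated Blakers--Massey analysis in the positive flat stable model structure on symmetric spectra, handled compatibly with the fibrant/cofibrant replacement machinery developed in Section 6.
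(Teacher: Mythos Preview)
Your outline is essentially the same as the paper's: both reduce to the unsuspended map, walk down the Bousfield--Kan tower, invoke the higher Freudenthal estimate on the relevant coface cube, and then let $\Sigmat^r$ contribute the final $+r$. The paper packages the middle step slightly differently: rather than going through the layer formula $\Loopt^m N^m$, it directly identifies the connectivity of $\holim_{\Delta^{\le n+1}}\mathfrak{C}(X)\to\holim_{\Delta^{\le n}}\mathfrak{C}(X)$ with the cartesianness of the iterated Freudenthal cube on the $0$-connected object $\Loopt^r X$ (via the standard cofinality of punctured cubes in $\Delta^{\le n}$), and then reads off $(n+2)$ from Theorem~\ref{hfsrs} with $k=0$.

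One caution: your intermediate claim that $N^m\mathfrak{C}(X)$ is $(2m+2)$-connected does not follow from ``each edge is $2$-connected plus Blakers--Massey.'' Edge connectivity alone, without knowing the cube is strongly (co)cartesian, gives nothing here; what you actually need is the structured induction of Theorem~\ref{hfsrs}, which shows the whole iterated Freudenthal $m$-cube is $(\id+1)(k+1)$-cartesian, not merely that its edges are highly connected. Translating the paper's bound through the layer formula gives $N^m$ roughly $2m$-connected, not $2m+2$; this still feeds into the same final $(n+2)$ for the unsuspended map, so your conclusion is unaffected, but the justification of that step should point to \ref{hfsrs} rather than to a generic Blakers--Massey bound on edges. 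You already flag this as the ``main obstacle,'' and indeed it is exactly the content of Theorem~\ref{hfsrs}.
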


\begin{proof}
By a formal argument, the connectivity of \begin{align} \holim_{\Delta^{\le n+1}} \mathfrak{C}(X) \to \holim_{\Delta^{\le n}} \mathfrak{C}(X) \end{align} is the same as the cartesianness of the $n$-fold iterated Freduenthal suspension map for $k=0$, which by \ref{hfsrs} is $(n+2)$-cartesian. But then if we increase $n$ this only increases, and so each such map is at least $(n+2)$-connected and therefore \begin{align} \holim_\Delta \mathfrak{C}(X) \to \holim_{\Delta^{\le n}} \mathfrak{C}(X) \end{align} is $(n+2)$-connected, and so applying $r$ fold suspension completes the proof.
\end{proof}

\begin{thm}\label{commutesigma}
For $X$ an $r$-connected $\Sigmat^r \Loopt^r$-coalgebra, $r>0$ the map \begin{align}\Sigmat^r \holim_{\Delta}\mathfrak{C}(X) \to \holim_{\Delta} \Sigmat^r \mathfrak{C}(X)\end{align} is an equivalence.
\end{thm}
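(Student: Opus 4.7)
The plan is to bound the connectivity of the comparison $f\colon \Sigmat^r \holim_\Delta \mathfrak{C}(X) \to \holim_\Delta \Sigmat^r \mathfrak{C}(X)$ from below by an arbitrarily large integer, forcing $f$ to be a weak equivalence. To do this, I would compare $f$ to its finite-stage analog via the natural commutative square
\begin{equation*}
\xymatrix{
\Sigmat^r \holim_{\Delta}\mathfrak{C}(X) \ar[r]^-{f} \ar[d]_-{\alpha_n} & \holim_{\Delta} \Sigmat^r \mathfrak{C}(X) \ar[d]^-{\beta_n} \\
\Sigmat^r \holim_{\Delta^{\le n}}\mathfrak{C}(X) \ar[r]_-{g_n} & \holim_{\Delta^{\le n}} \Sigmat^r \mathfrak{C}(X)
}
\end{equation*}
for each $n \ge 0$, and show that $\alpha_n$, $\beta_n$, and $g_n$ are each at least $(n+2+r)$-connected. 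Since $\beta_n \circ f = g_n \circ \alpha_n$, a standard connectivity 2-out-of-3 then forces $f$ to be highly connected for every $n$, hence a weak equivalence.

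For $\alpha_n$, Theorem \ref{lim1} applies directly. For $\beta_n$, I would rerun the formal argument used in the proof of \ref{lim1}, but applied to the cosimplicial object $\Sigmat^r \mathfrak{C}(X)$ in place of $\mathfrak{C}(X)$; each term has picked up $r$ extra connectivity from the outer $\Sigmat^r$, so the iterated Freudenthal cartesianness bound \ref{hfsrs} is now invoked with a larger connectivity parameter and yields a stage-wise estimate at least as good as the one in \ref{lim1}.

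The main obstacle will be the third piece, showing that the finite-stage map $g_n$ is $(n+2+r)$-connected. My plan here is induction on $n$. The base case $n=0$ is trivial because both sides reduce to $\Sigmat^r \mathfrak{C}(X)^0$. For the inductive step, I would present $\holim_{\Delta^{\le n+1}}$ as a homotopy pullback of $\holim_{\Delta^{\le n}}$ with the codegree-$(n+1)$ matching data, and then compare $\Sigmat^r$ applied to this homotopy pullback with the homotopy pullback of the $\Sigmat^r$-applied diagram. The defect in this comparison is governed precisely by a Blakers--Massey / iterated dual Freudenthal estimate of the sort developed in Section 3, and the hypothesis that $X$ is $r$-connected is what ensures every term of $\mathfrak{C}(X)$ is connected enough that these pullback-defect estimates compound to at least $(n+2+r)$ at each induction step. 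Letting $n\to\infty$ then finishes the argument.
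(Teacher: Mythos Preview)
Your overall architecture---the commutative square comparing $f$ to its $n$-truncated analogs and letting $n\to\infty$---is exactly the paper's. The paper likewise invokes Theorem~\ref{lim1} for the left vertical map $\alpha_n$.

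Where you diverge is in the other two edges. For the right vertical $\beta_n$ the paper does not bound it separately at all: instead it uses a standard $\lim\nolimits^1$ (Milnor) argument, observing that $\holim_\Delta \Sigmat^r\mathfrak{C}(X)$ is the limit of the tower $\{\holim_{\Delta^{\le n}}\Sigmat^r\mathfrak{C}(X)\}_n$, so that it suffices for the composites $\beta_n\circ f = g_n\circ\alpha_n$ to be increasingly connected. Your proposal to rerun the proof of Theorem~\ref{lim1} on $\Sigmat^r\mathfrak{C}(X)$ would need care, since that cosimplicial object is not literally of the form $\mathfrak{C}(-)$ and the coface cube is $\Sigmat^r$ applied to an iterated Freudenthal cube rather than an iterated Freudenthal cube itself; the $\lim\nolimits^1$ shortcut sidesteps this entirely.

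For the key bottom map $g_n$, the paper does \emph{not} argue by induction on $n$ with a Bousfield--Kan matching-object pullback as you propose. Instead it identifies $g_n$ with the cartesianness of $\Sigmat^r$ applied to the (infinitely cartesian) coface $(n{+}1)$-cube $\widetilde{\mathfrak{C}(X)}$, and then invokes Theorem~\ref{sigmaestimates} once to get the explicit bound $(2n+r+7)$. Your inductive scheme is plausible in outline, but carrying it out would require controlling the connectivity of the matching maps and the Blakers--Massey defect at each stage---which is precisely the content that Theorem~\ref{sigmaestimates} packages into a single cube estimate (via higher Freudenthal \ref{hfsrs} plus higher (dual) Blakers--Massey). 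So your route is not wrong, but it is less direct and the vague appeal to ``estimates of the sort developed in Section~3'' would, when made precise, amount to reproving~\ref{sigmaestimates}.
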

\begin{proof}
Theorem \ref{lim1} and a standard $\lim \nolimits^1$ short exact sequence argument reduce this to showing that $$\Sigmat^r \holim_{\Delta^{\le n}} \mathfrak{C}(X) \to \holim_{\Delta^{\le n}}\Sigmat^r \mathfrak{C}(X)$$ has increasing connectivity in $n$. And by theorem \ref{sigmaestimates} that map is $(2n+r+7)$-connected.
\end{proof}

\begin{thm}
For $X$ an $r$-connected $\Sigmat^r \Loopt^r$-coalgebra, $ r>0$ the map \begin{align}\Sigmat^r \holim_{\Delta} \mathfrak{C}(X) \to X\end{align} is an equivalence.
\end{thm}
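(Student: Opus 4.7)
The plan is to first use Theorem \ref{commutesigma} to commute the suspension past the homotopy limit, and then invoke a standard extra-codegeneracies argument that exploits the $\Sigmat^r\Loopt^r$-coalgebra structure on $X$.

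First, I would factor the map under study as the composite
\begin{align*}
\Sigmat^r \holim\nolimits_\Delta \mathfrak{C}(X) \xrightarrow{\ \simeq\ } \holim\nolimits_\Delta \Sigmat^r \mathfrak{C}(X) \longrightarrow X,
\end{align*}
where the first arrow is an equivalence by Theorem \ref{commutesigma}, using the hypotheses $r>0$ and $X$ being $r$-connected. It therefore suffices to show that the second map $\holim_\Delta \Sigmat^r\mathfrak{C}(X) \to X$ is a weak equivalence.

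Next, unpacking the definition of the cobar construction (Remark \ref{frakC}), the cosimplicial object $\Sigmat^r \mathfrak{C}(X)$ is identified with the standard cobar resolution $(\Sigmat^r\Loopt^r)^{\bullet+1}X$ associated to the homotopical comonad $\Sigmat^r\Loopt^r$. The $\Sigmat^r\Loopt^r$-coaction on $X$ supplies the augmentation $X\to\Sigmat^r\Loopt^r X$, and the counit $\nu\colon\Sigmat^r\Loopt^r\to\id$ applied to the leftmost factor at each cosimplicial level supplies extra codegeneracies. These extra codegeneracies exhibit the augmented cosimplicial object as (homotopically) split, and the standard fact that a split augmented cosimplicial object is a homotopy limit diagram then yields $\holim_\Delta \Sigmat^r\mathfrak{C}(X)\wequiv X$, finishing the proof.

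The main obstacle I anticipate is the homotopical coherence of the extra codegeneracies. Because $\Sigmat^r\Loopt^r$ is a homotopical comonad (built from $F$, $C$, $\Omega^r$ and $\Sigma^r$) and $X$ is only a homotopy coalgebra, the codegeneracies built from $\nu$ and from the coaction of $X$ satisfy the required cosimplicial identities only up to coherent homotopy. Verifying that the Blumberg--Riehl style framework used to define $\mathfrak{C}$ records enough of this coherence to make the split-augmented-cosimplicial-object argument go through is the one nontrivial point; this is precisely what the homotopy (co)algebra formalism set up in the introduction is designed to handle.
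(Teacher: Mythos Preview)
Your proposal is correct and follows essentially the same approach as the paper: identify $\Sigmat^r\mathfrak{C}(X)$ with $\Cobar(\Sigmat^r\Loopt^r,\Sigmat^r\Loopt^r,X)$, use the extra codegeneracy to conclude $\holim_\Delta \Sigmat^r\mathfrak{C}(X)\wequiv X$, and finish with Theorem~\ref{commutesigma}. The paper phrases the extra-codegeneracy step as a cofinality argument citing Dror--Dwyer rather than in the split-augmented-cosimplicial language you use, and it does not pause over the homotopy-coherence concern you flag, but the substance is the same.
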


\begin{proof}
First recognize that $\Sigmat^r\mathfrak{C}(X)=\Cobar (\Sigmat^r \Loopt^r, \Sigmat^r \Loopt^r, X)$, which has an extra codegeneracy map, and therefore by the cofinality argument in Dror-Dwyer \cite[3.16]{Dror_Dwyer_long_homology} $$\holim_{\Delta}\Sigmat^r \mathfrak{C}(X) \to X$$ is an equivalence, and so \ref{commutesigma} completes the proof.
\end{proof}

\begin{thm}
For $Y$ a $0$-connected space the map \begin{align} Y \to \holim_{\Delta}\mathfrak{C}(\Sigmat^r Y) \end{align} is an equivalence.
\end{thm}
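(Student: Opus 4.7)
The plan is to follow the same pattern as Theorems \ref{lim1} and \ref{commutesigma}: first bound the connectivity of $Y \to \holim_{\Delta^{\le n}} \mathfrak{C}(\Sigmat^r Y)$ from below by a function that diverges in $n$, and then pass to the full homotopy limit by a Milnor $\lim^1$ short exact sequence argument.

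For the first step, $\mathfrak{C}(\Sigmat^r Y)$ has $n$-cosimplices of the form $(\Loopt^r \Sigmat^r)^{n+1} Y$, augmented by the Freudenthal unit $\eta : Y \to \Loopt^r \Sigmat^r Y$. The cube/cosimplicial translation used in the proof of \ref{lim1} identifies the connectivity of $Y \to \holim_{\Delta^{\le n}} \mathfrak{C}(\Sigmat^r Y)$ with the cartesianness of the $(n+1)$-cube obtained by iterating $\Loopt^r\Sigmat^r$ along $\eta$ starting from $Y$. Since $Y$ is $0$-connected, the iterated Freudenthal suspension theorem \ref{hfsrs} applied with $k = 0$ bounds this cartesianness below by a linear function of $n$ (of the form $n+2$, exactly as in \ref{lim1}), which diverges as $n\to\infty$.

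Passing to the full homotopy limit is then the same Milnor argument used in \ref{commutesigma}: for each fixed degree $d$, the bound $(n+2)$-connected forces the inverse system $\{\pi_d \holim_{\Delta^{\le n}} \mathfrak{C}(\Sigmat^r Y)\}_n$ to stabilize at $\pi_d Y$ once $n \ge d-1$, so the $\lim^1$ term vanishes and the $\lim$ term computes to $\pi_d Y$, making $Y \to \holim_\Delta \mathfrak{C}(\Sigmat^r Y)$ an isomorphism on every homotopy group and hence a weak equivalence. The main obstacle is the cube/cosimplicial translation in the first step: the $(n+1)$-cube obtained by augmenting the truncated cosimplicial object $(\Loopt^r\Sigmat^r)^{\bullet+1} Y|_{\Delta^{\le n}}$ by $Y$ must be identified with the iterated dual Freudenthal cube to which \ref{hfsrs} applies, so that the Freudenthal cartesianness estimate transfers faithfully to the connectivity of $Y \to \holim_{\Delta^{\le n}} \mathfrak{C}(\Sigmat^r Y)$. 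Once this identification is carried out in the same formal manner as in the proof of \ref{lim1}, the two steps above conclude the proof.
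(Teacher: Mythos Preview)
Your proposal is correct and follows essentially the same approach as the paper: filter by the truncated $\holim_{\Delta^{\le n}}$, identify the connectivity of $Y \to \holim_{\Delta^{\le n}} \mathfrak{C}(\Sigmat^r Y)$ with the cartesianness of the iterated Freudenthal $(n+1)$-cube via the coface-cube/cosimplicial translation, invoke Theorem~\ref{hfsrs} with $k=0$ to get $(n+2)$-cartesianness, and finish with a $\lim^1$ argument. One small slip: near the end you write ``iterated dual Freudenthal cube'' when you mean the iterated Freudenthal cube (as you correctly say earlier in the proposal).
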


\begin{proof}
As above we filter by truncation. By a cofinality argument the desired connectivity of the $n$-th level is the cocartesianness of an iterated Freudenthal cube, whose uniform cartesianness we understand via higher Freudenthal Suspension (\ref{hfsrs}). Therefore by the immediately preceding theorem, we understand its cocartesianness, which is increasing in $n$. Another $\lim \nolimits^1$ short exact sequence argument completes the proof.
\end{proof}

\section{Connectivity Estimates for Freudenthal}

We proceed with a similar strategy. The following theorem is central to much of the analysis.

\begin{thm}[Freudenthal suspension for structured ring spectra]\label{fsrs} Let $k \ge 0$, $r \ge 1$. For $X$ a cofibrant $k$-connected $\mathcal{O}$-algebra, the map $X \to \Loopt^r \Sigmat^r X$ is $(2k+2)$-connected.
\end{thm}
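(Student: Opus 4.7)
The plan is to prove this by induction on $r$, reducing the base case $r=1$ to a single application of the Blakers-Massey theorem for $\capO$-algebras (Ching-Harper) and then bootstrapping via a straightforward connectivity count for the inductive step.

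For the base case, the derived suspension $\Sigmat X$ is defined by the homotopy cocartesian square in $\AlgO$ with initial vertex $X$, two parallel arrows $X\to *$, and final vertex $\Sigmat X$. The comparison map from this square to its associated homotopy cartesian square is precisely the unit $X\to \Loopt\Sigmat X$, since the derived pullback of $*\leftarrow \Sigmat X\rightarrow *$ is $\Loopt\Sigmat X$. Each parallel map $X\to *$ is $(k+1)$-connected (because $X$ is $k$-connected), so Blakers-Massey for $\capO$-algebras guarantees the square is $(2k+2)$-cartesian; equivalently, $X\to \Loopt\Sigmat X$ is $(2k+2)$-connected.

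For the inductive step with $r\ge 2$, I would factor the unit as
\begin{align*}
X \to \Loopt\Sigmat X \xrightarrow{\Loopt\eta'} \Loopt\bigl(\Loopt^{r-1}\Sigmat^{r-1}(\Sigmat X)\bigr) = \Loopt^r\Sigmat^r X,
\end{align*}
where $\eta'\colon \Sigmat X \to \Loopt^{r-1}\Sigmat^{r-1}(\Sigmat X)$ is the $(r{-}1)$-fold Freudenthal unit at $\Sigmat X$. Since $\Sigmat$ raises connectivity by one on cofibrant $\capO$-algebras, $\Sigmat X$ is $(k+1)$-connected, so by the induction hypothesis $\eta'$ is $(2(k+1)+2)=(2k+4)$-connected; applying $\Loopt$ drops connectivity by at most one, yielding a $(2k+3)$-connected map. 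Composing with the base case, the total composite $X\to \Loopt^r\Sigmat^r X$ has connectivity at least $\min(2k+2,\,2k+3) = 2k+2$, as claimed.

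The main obstacle is the correctly calibrated Blakers-Massey theorem for $\capO$-algebras producing $(2k+2)$-cartesianness from two $(k+1)$-connected initial maps; I would invoke the form established by Ching-Harper in \cite{Ching_Harper_derived_Koszul_duality} and verify it applies to the present derived interpretation of $\Sigmat$ and $\Loopt$ built from the cofibrant and fibrant replacements $C$ and $F$. A secondary but routine ingredient is the connectivity estimate for $\Sigmat$ itself, needed to close the induction; this follows from the defining pushout square together with the $r=1$ case already in hand.
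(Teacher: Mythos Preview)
Your argument is correct, but it is not the route the paper takes. The paper proves Theorem~\ref{fsrs} by reducing it to the \emph{dual} Freudenthal statement (Theorem~\ref{dfsrs}): one looks at the retraction
\[
\Sigmat^r X \longrightarrow \Sigmat^r\Loopt^r\Sigmat^r X \longrightarrow \Sigmat^r X,
\]
applies Theorem~\ref{dfsrs} to the $(k+r)$-connected object $\Sigmat^r X$ to get that the second map is $(2k+r+3)$-connected, and then reads off the connectivity of the first map---and hence of the unit---from the triangle identity. So the paper's proof makes Theorem~\ref{fsrs} logically downstream of Section~5.

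Your approach is instead the direct one that the paper only mentions in the Remark immediately following Theorem~\ref{fsrs}: prove the $r=1$ case by applying Blakers--Massey for $\capO$-algebras to the defining homotopy pushout square for $\Sigmat X$, and then induct on $r$ by factoring the unit. This is precisely dual to how the paper proves Theorem~\ref{dfsrs}, and it has the advantage of being self-contained (no forward reference to Section~5). The paper's approach, on the other hand, is economical in that it recycles a theorem needed anyway for the loop-side story.

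Two small points. First, the Blakers--Massey input you want is in \cite{Ching_Harper} (the higher homotopy excision paper, Theorems~1.7--1.8 there), not in \cite{Ching_Harper_derived_Koszul_duality}. Second, it is worth stating explicitly that the Ching--Harper numerology for a cocartesian $2$-cube with two $(k+1)$-connected initial maps really does yield $(2k+2)$-cartesianness---this is one better than the space-level Blakers--Massey bound of $2k+1$, and that improvement is exactly what makes your base case land on the correct number.
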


\begin{proof}
Consider the maps $\Sigmat^r X \to \Sigmat^r \Loopt^r \Sigmat^r X \to \Sigmat^r X$, where the former is $\Sigmat^r$ applied to the Freudenthal suspension map for $X$, and the latter is the dual Freudenthal suspension map for $\Sigmat^r X$. By \ref{dfsrs} the latter map has connectivity $2(k+r)+3-r=2k+3+r$. Since the composition is the identity, it's infinitely connected so the standard connectivity result gives us that the latter map is $2k+2+r$ connected, which completes the proof.
\end{proof}

\begin{rem}
We could also prove this directly by using dual Blakers-Massey on a pushout square, dual to the proof of \ref{dfsrs}.
\end{rem}

\begin{prop}\label{blah}
An $n$-cube in $\mathcal{O}$-algebras is $(\id+1)(k+1)$-cartesian iff it is $((\id+1)(k+2)-2)$-cocartesian for $k\ge 0$.
\end{prop}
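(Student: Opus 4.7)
The plan is to prove this by induction on the cube dimension, with the base case being essentially the Freudenthal suspension theorem \ref{fsrs}. For a $1$-cube (a single map $f\colon X\to Y$) the claim reduces to: $f$ is $(2k+2)$-cartesian iff $f$ is $(2k+2)$-cocartesian. The former controls the connectivity of $\hofib(f)$ and the latter controls the connectivity of the homotopy cofiber, and within the Freudenthal range these two connectivities agree via the comparison $\hofib(f)\to\Loopt^r(\text{hocof}(f))$, whose connectivity is controlled exactly by Theorem \ref{fsrs} applied to the (co)fiber.

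For the inductive step with $n\ge 2$, I would decompose an $n$-cube $\chi$ along a chosen direction as a morphism $\chi_0\to\chi_1$ of $(n-1)$-cubes, and use the standard reductions: $\chi$ is $c$-cartesian iff the induced map of total fibers $\mathrm{tfib}(\chi_0)\to\mathrm{tfib}(\chi_1)$ is $c$-cartesian as a $1$-cube, and $\chi$ is $c$-cocartesian iff the induced map of total cofibers $\mathrm{tcof}(\chi_0)\to\mathrm{tcof}(\chi_1)$ is $c$-cocartesian. Applying the inductive hypothesis to $\chi_0$ and $\chi_1$ to translate cartesianness of each $(n-1)$-cube into the corresponding connectivity of its total fiber (resp.\ cocartesianness into connectivity of its total cofiber), and then applying the base case to the induced $1$-cube, should combine to produce the stated iff at cube dimension $n$.

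The main obstacle will be the careful bookkeeping of connectivity shifts so that the arithmetic $(\id+1)(k+1)$ on the cartesian side and $(\id+1)(k+2)-2$ on the cocartesian side emerges exactly. The total fiber of a $c$-cartesian $(n-1)$-cube has connectivity that shifts linearly in $n-1$ (and similarly for total cofibers), while the base case contributes a Freudenthal-type jump $c\mapsto 2c+2$; one must verify that the cumulative linear-in-$n$ correction matches the gap of $n-1$ between the two stated formulas. A secondary concern is ensuring that the corner $\capO$-algebras of $\chi$ remain sufficiently connected at every stage for \ref{fsrs} to apply — in practice this is encoded by the parameter $k$ and by propagating a uniform connectivity bound on the faces through the inductive reduction.
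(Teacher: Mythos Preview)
There is a genuine gap in both the base case and the inductive step.

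For $n=1$ you want to compare $\hofib(f)$ with $\Loopt\,\mathrm{hocof}(f)$, but Theorem~\ref{fsrs} only controls maps of the form $Z\to\Loopt^r\Sigmat^r Z$; it says nothing about $\hofib(f)\to\Loopt\,\mathrm{hocof}(f)$ for an arbitrary $f$ between $k$-connected objects. The connectivity of that comparison map is precisely the content of Blakers--Massey applied to the cofiber square of $f$, so you are invoking the very input you meant to avoid. (There is also a circularity issue internal to this paper: \ref{fsrs} is deduced from \ref{dfsrs}, which already uses dual Blakers--Massey.) The inductive step has a separate problem. Writing $\chi$ as $\chi_0\to\chi_1$, the cartesianness of $\chi$ is detected by the $1$-cube $\mathrm{tfib}(\chi_0)\to\mathrm{tfib}(\chi_1)$, whereas its cocartesianness is detected by the \emph{different} $1$-cube $\mathrm{tcof}(\chi_0)\to\mathrm{tcof}(\chi_1)$. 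Your base case, even if granted, converts cartesianness of the first $1$-cube into cocartesianness of that same $1$-cube; this does not yield cocartesianness of the second one. The inductive hypothesis on $\chi_0$ and $\chi_1$ separately only bounds the connectivity of each $\mathrm{tcof}(\chi_i)$, not of the map between them, so the loop never closes.

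The paper's argument is a one-line application of higher Blakers--Massey and its dual \cite[1.7,~1.11]{Ching_Harper}. With face estimates $k_T=(|T|+1)(k+1)$ on the cartesian side (respectively $(|T|+1)(k+2)-2$ on the cocartesian side), the minimum of $\sum_{T\in\lambda}k_T$ over partitions $\lambda$ of a $d$-element set is attained at the one-block partition, since splitting any block in two increases the sum by $k+1\ge 1$ (respectively by $k\ge 0$). Plugging the trivial partition into each theorem gives $d-1+(d+1)(k+1)=(d+1)(k+2)-2$ and $-d+1+(d+1)(k+2)-2=(d+1)(k+1)$, which is exactly the stated equivalence on every face.
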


\begin{proof}
This is immediate from higher Blakers-Masey and higher dual Blakers-Massey for structured ring spectra \cite[1.7, 1.11]{Ching_Harper}, since the minimal partitions will be the trivial ones.
\end{proof}

\begin{thm}\label{hfsrs}
Let $\mathcal{X}$ be a $W$-cube in $\mathcal{O}$-algebras, $|W|=n$. For $k\ge 0$, $r \ge 1$ if $\mathcal{X}$ is $(\id+1)(k+1)$-cartesian, then the $(n+1)$-cube $\mathcal{X} \to \Loopt^r \Sigmat^r \mathcal{X}$ is $(\id+1)(k+1)$-cartesian.
\end{thm}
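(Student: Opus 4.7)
The plan is induction on $n = |W|$, bootstrapping from Theorem \ref{fsrs} via Proposition \ref{blah}. The base case $n = 0$ is Theorem \ref{fsrs} itself: a $0$-cube is a single object $X$, and asking that the $1$-cube $X \to \Loopt^r\Sigmat^r X$ be $(\id+1)(k+1)$-cartesian is exactly asking the map to be $2(k+1)$-connected.

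For the inductive step, write $\mathcal{Y}$ for the $(n+1)$-cube $\mathcal{X} \to \Loopt^r\Sigmat^r\mathcal{X}$. Proper subcubes of $\mathcal{Y}$ split into two types: those that avoid the new direction, which are either subcubes of $\mathcal{X}$ (covered by hypothesis) or of $\Loopt^r\Sigmat^r\mathcal{X}$ (covered because $\Loopt^r$ preserves cartesianness and the vertex-wise \ref{fsrs} estimate propagates the needed bound), and those that do meet the new direction but are proper, which reduce to the inductive hypothesis applied to a face of $\mathcal{X}$.

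The substantive content is the full $(n+1)$-cube $\mathcal{Y}$ itself. Following the strategy of Theorem \ref{fsrs}, I apply $\Sigmat^r$ to $\mathcal{Y}$ and post-compose with the $(\Sigmat^r\Loopt^r \Rightarrow \id)$-counit to obtain the identity cube on $\Sigmat^r\mathcal{X}$. The identity cube is infinitely cartesian; the counit cube $\Sigmat^r\Loopt^r\Sigmat^r\mathcal{X} \to \Sigmat^r\mathcal{X}$ has cartesianness given by a higher (cube) version of the dual Freudenthal estimate \ref{dfsrs}, obtained by applying Proposition \ref{blah} to the cocartesian cube $\Sigmat^r\mathcal{X}$ (noting $\Sigmat^r$ preserves cocartesianness as a left adjoint and shifts connectivity by $r$). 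A two-out-of-three argument for cube connectivity then extracts the required cartesianness of $\Sigmat^r\mathcal{Y}$, and applying $\Loopt^r$ transfers this back to $\mathcal{Y}$, completing the induction.

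The main obstacle will be the arithmetic bookkeeping --- threading the $+r$ shift coming from $\Sigmat^r$ through Proposition \ref{blah}'s cartesian/cocartesian duality so that the bound $(\id+1)(k+1)$ comes out exactly rather than settling for something weaker, and in particular verifying the higher dual Freudenthal estimate for the counit cube cleanly enough that the two-out-of-three step delivers the sharp bound.
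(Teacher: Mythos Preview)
Your inductive scaffold and the treatment of proper subcubes are fine, but the core step for the full $(n+1)$-cube has a genuine gap. The argument you sketch mimics the proof of Theorem~\ref{fsrs}: suspend, use the counit to factor the identity, and run two-out-of-three. That works when $n=0$ because for a \emph{map} the notions of connectivity, cartesianness, and cocartesianness coincide, so knowing that $\Sigmat^r f$ is $(m+r)$-connected immediately gives that $f$ is $m$-connected (via cofibers, with which $\Sigmat^r$ commutes). For $(n+1)$-cubes with $n\ge 1$ this collapses: $\Sigmat^r$ interacts well with \emph{cocartesianness} (it commutes with total cofibers and shifts by $r$) but not with cartesianness. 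So after your two-out-of-three you would know something about the cartesianness of $\Sigmat^r\mathcal{Y}$; ``applying $\Loopt^r$'' then gives the cartesianness of $\Loopt^r\Sigmat^r\mathcal{Y}$, which is \emph{not} $\mathcal{Y}$. The only comparison available is the unit $\mathcal{Y}\to\Loopt^r\Sigmat^r\mathcal{Y}$, and that is an $(n+2)$-dimensional instance of exactly the statement you are trying to prove --- the argument is circular. Rerouting through cocartesianness and then invoking Blakers--Massey to come back loses too much: the loss in Proposition~\ref{blah}-style conversions is linear in the cube dimension, and you will not recover the sharp $(\id+1)(k+1)$ bound uniformly.

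The paper avoids this by a different decomposition. Instead of suspending $\mathcal{Y}$, it introduces the auxiliary cube $\mathcal{C}$ concentrated at the total homotopy cofiber $C$ of $\mathcal{X}$ and studies the $(n+2)$-square of cubes
\[
\xymatrix{
  \mathcal{X} \ar[r] \ar[d] & \Loopt^r\Sigmat^r\mathcal{X} \ar[d] \\
  \mathcal{C} \ar[r] & \Loopt^r\Sigmat^r\mathcal{C}.
}
\]
The bottom row reduces to the $0$-cube Freudenthal estimate for the highly connected object $C$; the two columns are handled by higher Blakers--Massey directly, using the known face-wise cocartesianness of $\mathcal{X}$ and the fact that $\Sigmat^r$ shifts it. One then reads off the cartesianness of the top row from the other three. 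The key point is that this never requires desuspending a cartesian estimate --- all the $\Sigmat^r$ manipulation happens on cocartesian data, where it is honest. Your bookkeeping worry is not the real obstacle; the transfer step is.
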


\begin{proof}
We induct on $n$. If $n=0$ this is the Freudenthal suspension theorem for structured ring spectra.

For $n>0$, without loss of generality, we may assume $\mathcal{X}$ is a cofibration cube. To make sure things are homotopically meaningful we rely on implicit use of \cite[I.2.5]{Chacholski_Scherer}. Let $C:=thocofib(\mathcal{X})$ and let $\mathcal{C}$ be the $W$-cube defined as $\mathcal{C}|_\emptyset =C$ and $\mathcal{C}|_T=*$ for $T \neq \emptyset$. Consider the following diagram of cubes:
\begin{align}
\xymatrix{
  \mathcal{X} \ar[r] \ar[d] & \Loopt^r \Sigmat^r\mathcal{X} \ar[d] \\
  \mathcal{C} \ar[r] &\Loopt^r \Sigmat^r \mathcal{C}
}
\end{align}

We want to know the cartesianness of the top cube, and we can do this by studying the other three. First, since $\mathcal{X}$ is $(\id+1)(k+1)$-cartesian by assumption, it is $((\id+1)(k+2)-2)$-cocartesian by the above theorem, and thus so is $\mathcal{C}$. Therefore, $C$ is $((n+1)(k+2)-2)$-connected, and so by Freudenthal suspension for structured ring spectra, $C \to \Loopt^r \Sigmat^r C$ is $(2(n+1)(k+2)-2)$-connected. Since the total fiber of $\mathcal{C} \to \Loopt^r \Sigmat^r \mathcal{C}$ is equivalent to the fiber of $\Loopt^n C \to \Loopt^{n+r} \Sigmat^{r} C$, the $(n+1)$ cube is $(2(n+1)(k+2)-n-2)$-cartesian. Note that $2(n+1)(k+2)-n-2 = (2n+2)(k+1)+n > (n+2)(k+1)$.

Next, we consider $\mathcal{X} \to \mathcal{C}$. This is an infinitely cocartesian cube, and we also know that a $d$-dimensional face of $\mathcal{X}$ is $((d+1)(k+2)-2)$-cocartesian. So a $(d+1)$-dimensional face is $((d+2)(k+2)-2)$-cocartesian, and a $(d+1)$-dimensional face of $\mathcal{X} \to \mathcal{C}$ made up of a map of $d$-dimensional faces of $\mathcal{C}$ and $\mathcal{X}$ is $((d+2)(k+2)-1)$-cocartesian. Therefore, since the minimal partition will be one of each of these two types of faces, by higher Blakers-Massey for structured ring spectra $\mathcal{X} \to \mathcal{C}$ is $((n+2)(k+1))$-cartesian.

Finally, to deal with $\Loopt^r \Sigmat^r \mathcal{X} \to \Loopt^r \Sigmat^r \mathcal{C}$, we first apply $\Sigmat^r$ to our above cocartesian estimates, to give faces of $((d+2)(k+2)+r-2)$-cocartesian and $((d+2)(k+2)+r-1)$-cocartesian, and for the same reasons higher Blakers-Massey for structured ring spectra tells us that $\Sigmat^r \mathcal{X} \to \Sigmat^r \mathcal{C}$ is $((n+2)(k+1)+2r)$-cartesian, and thus $\Loopt^r \Sigmat^r \mathcal{X} \to \Loopt^r \Sigmat^r \mathcal{C}$ is $((n+2)(k+1)+r)$-cartesian.

Therefore by the standard cartesianness properties we conclude that $\mathcal{X} \to \Loopt^r \Sigmat^r\mathcal{X}$ is $(n+2)(k+1)$-cartesian. Repeating this argument on all faces completes the proof.
\end{proof}

\begin{thm}\label{sigmaestimates}
Let $X$ be an $\Sigmat^r \Loopt^r$-coalgebra in $\mathcal{O}$-algebras, $k\ge r> 0$, and $n \ge 1$. Consider the infinitely cartesian $(n+1)$-cube $\widetilde{\mathfrak{C}(X)}$. If $X$ is $k$-connected then $\Sigmat^r \widetilde{\mathfrak{C}(X)}$ is $((k-r+1)(n+3)+r+1)$-cartesian.
\end{thm}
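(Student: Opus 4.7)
The plan is to parallel the strategy of Theorem \ref{hfsrs}: move between cartesianness and cocartesianness via higher (dual) Blakers-Massey for structured ring spectra, with $\Sigmat^r$ inserted to improve cocartesianness by $r$. Schematically,
\[
\widetilde{\mathfrak{C}(X)} \text{ is $\infty$-cartesian}
\;\Rightarrow\; \text{cocartesianness on each $d$-face}
\;\xrightarrow{\,\Sigmat^r\,}\; \text{better cocartesianness}
\;\Rightarrow\; \text{cartesianness of } \Sigmat^r\widetilde{\mathfrak{C}(X)}.
\]

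First I would translate the $\infty$-cartesianness of $\widetilde{\mathfrak{C}(X)}$ into cocartesianness estimates on its sub-faces. Its vertices are iterates of the form $(\Sigmat^r\Loopt^r)^j X$, hence at least $(k-r)$-connected: since $X$ is $k$-connected with $k \ge r$, the functor $\Loopt^r$ drops connectivity by $r$, and $\Sigmat^r$ raises it back by $r$. Because each $d$-dimensional sub-face of $\widetilde{\mathfrak{C}(X)}$ is in particular $(d+1)(k-r+1)$-cartesian, Proposition \ref{blah} applied with parameter $k-r$ yields that each such sub-face is $((d+1)(k-r+2)-2)$-cocartesian.

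Next I apply $\Sigmat^r$. As a (derived) left adjoint, $\Sigmat^r$ commutes up to homotopy with total homotopy cofibers, and raises connectivity of $\capO$-algebras by $r$; so each $d$-dimensional sub-face of $\Sigmat^r\widetilde{\mathfrak{C}(X)}$ becomes $((d+1)(k-r+2)-2+r)$-cocartesian. Applying higher Blakers-Massey for structured ring spectra \cite{Ching_Harper} to the resulting $(n+1)$-cube, and writing $f(d):=(d+1)(k-r+2)-2+r$, the cartesianness is bounded below by $\min_\lambda \sum_{T \in \lambda} f(|T|) - n$ over partitions $\lambda$ of $\{1,\ldots,n+1\}$. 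Since the per-part coefficient in $\sum f$ is positive in $k$ (using $k \ge r > 0$), the minimum is attained at the trivial partition $s=1$, and rearrangement of $f(n+1)-n$ recovers the claimed bound $(k-r+1)(n+3)+r+1$.

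The main obstacle is the first step: pinning down the precise cocartesianness formula on sub-faces of $\widetilde{\mathfrak{C}(X)}$. One must exploit both the augmentation structure of $\widetilde{\mathfrak{C}(X)}$ and its $\infty$-cartesianness on every sub-face (rather than treating it as a generic cartesian cube) to extract constants compatible with the target bound; any slight slack in Proposition \ref{blah} propagates through $\Sigmat^r$ and is amplified by the Blakers-Massey minimization. By contrast, the last step is routine minimization over partitions.
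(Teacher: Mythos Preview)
Your overall architecture matches the paper's: pass from cartesian to cocartesian estimates on $\widetilde{\mathfrak{C}(X)}$, push them up by $r$ using $\Sigmat^r$, and return via higher Blakers--Massey. But there is a genuine gap in the middle, and your final arithmetic claim is incorrect.

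Concretely: with $f(d)=(d+1)(k-r+2)-2+r$ you assert that $f(n+1)-n$ rearranges to $(k-r+1)(n+3)+r+1$. It does not. Writing $m=k-r$, one has $f(n+1)-n=(n+2)(m+1)+r$, whereas the target is $(n+3)(m+1)+r+1$; the shortfall is exactly $m+2=k-r+2$. So the estimate coming solely from Proposition~\ref{blah} applied to the whole $(n+1)$-cube (which is what produces the factor $(n+2)$) is one step too weak.

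The paper closes this gap with two ingredients you omit. First, it invokes Theorem~\ref{hfsrs} on the specific $n$-face of $\widetilde{\mathfrak{C}(X)}$ built by iterating the Freudenthal map on $\Loopt^r X$; since $\Loopt^r X$ is $(k-r)$-connected, that face is $(\id+1)(k-r+1)$-cartesian. Second, it combines this with the $\infty$-cartesianness of the full cube and a \emph{common section argument} (using the codegeneracy retractions present in the coface cube) together with higher dual Blakers--Massey to upgrade the cocartesianness of the \emph{full} $(n+1)$-cube to $((n+3)(k-r+2)-2)$, not merely $((n+2)(k-r+2)-2)$. That extra $(k-r+2)$ is precisely what survives the final Blakers--Massey step to hit the stated bound.

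Relatedly, your assertion that ``each $d$-dimensional sub-face of $\widetilde{\mathfrak{C}(X)}$ is in particular $(d+1)(k-r+1)$-cartesian'' is not free: it does not follow from vertex connectivity, nor from $\infty$-cartesianness of the ambient cube (sub-faces of a cartesian cube need not be cartesian). It is exactly Theorem~\ref{hfsrs} plus the retract structure of the coface cube that supplies this, and you should name those inputs rather than treat the statement as given.
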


\begin{proof}
We know that one of the initial faces of $\widetilde{\mathfrak{C}(X)}$ is built from iterating the Freudenthal suspension map on $\Loopt^r X$, and thus by the higher Freudenthal suspension theorem for structured ring spectra is $(\id+1)(k-r+1)$-cartesian. We also know that $\widetilde{\mathfrak{C}(X)}$ is infinitely cartesian. Along with a common section argument \cite{Ching_Harper_derived_Koszul_duality}, higher dual Blakers-Massey shows that $\widetilde{\mathfrak{C}(X)}$ is $((n+3)(k-r+2)-2)$-cocartesian and another standard higher dual Blakers-Massey argument gets that it is also $((\id+1)(k-r+2)-2)$-cocartesian. Then $\Sigmat^r\widetilde{\mathfrak{C}(X)}$ is $((n+3)(k-r+2)-2+r)$-cocartesian and also $((\id+1)(k-r+2)-2+r)$-cocartesian immediately from the fact that $\Sigmat^r$ just increases cocartesianness by $r$. Finally, an application of higher Blakers-Massey for structured ring spectra completes the proof.
\end{proof}

\section{Proof of Theorem \ref{mainthm}}

This section is an outline of the proof of \ref{mainthm}.

\begin{thm}
For $X$ a $0$-connected $\Loopt^r \Sigmat^r$-algebra, $ r>0$ the map \begin{align}\hocolim_{\Delta^{\op}}\tilde \Omega^r \mathfrak{B}(X) \to \tilde \Omega^r \hocolim_{\Delta^{\op}} \mathfrak{B}(X)\end{align} is an equivalence.
\end{thm}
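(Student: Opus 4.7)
The plan is to dualize the proof of Theorem \ref{commutesigma} to the simplicial setting by filtering along the skeletal filtration $\{\hocolim_{\Delta^{\op}_{\le n}} \mathfrak{B}(X)\}_n$. The natural comparison map fits into a factorization
\begin{align*}
\hocolim_{\Delta^{\op}_{\le n}}\tilde\Omega^r \mathfrak{B}(X) \to \tilde\Omega^r \hocolim_{\Delta^{\op}_{\le n}}\mathfrak{B}(X) \to \tilde\Omega^r \hocolim_{\Delta^{\op}}\mathfrak{B}(X)
\end{align*}
together with the source comparison $\hocolim_{\Delta^{\op}_{\le n}}\tilde\Omega^r \mathfrak{B}(X) \to \hocolim_{\Delta^{\op}}\tilde\Omega^r \mathfrak{B}(X)$. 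The strategy is to show that all three maps have connectivity tending to infinity with $n$, so that passing to the sequential colimit in $n$ (where connectivity estimates pass through directly on homotopy groups, without any $\lim^1$ correction) yields an equivalence.

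For the two approximation maps, I would establish the dual of Theorem \ref{lim1}: for $X$ a $0$-connected $\Loopt^r\Sigmat^r$-algebra, the map $\hocolim_{\Delta^{\op}_{\le n}} \mathfrak{B}(X) \to \hocolim_{\Delta^{\op}_{\le n+1}} \mathfrak{B}(X)$ is controlled, via a standard formal argument, by the cocartesianness of an $n$-fold iterated dual Freudenthal suspension cube. Dualizing the argument of Theorem \ref{hfsrs}, namely using higher dual Blakers-Massey for structured ring spectra and the basic connectivity estimate of Theorem \ref{dfsrs}, this iterated cube is cocartesian with degree growing in $n$, so the truncation map is increasingly connected. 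Applying $\tilde\Omega^r$ costs at most $r$ and preserves the growth in $n$.

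For the finite-level commutation map, I would establish the dual of Theorem \ref{sigmaestimates}. Consider the infinitely cocartesian $(n+1)$-cube $\widetilde{\mathfrak{B}(X)}$ extracted from $\mathfrak{B}(X)$. One of its initial faces is built by iterating the dual Freudenthal suspension map on $\Sigmat^r X$, which by the cubical dual Freudenthal estimate is $(\id+1)(k-r+1)$-cocartesian for appropriate $k$. A common section argument together with higher Blakers-Massey for structured ring spectra yields cartesianness bounds for $\widetilde{\mathfrak{B}(X)}$; these are preserved (up to a shift by $r$) by $\tilde\Omega^r$, and a final application of higher dual Blakers-Massey converts them into a cocartesianness estimate growing linearly in $n$ for $\tilde\Omega^r \widetilde{\mathfrak{B}(X)}$. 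This cocartesianness controls the total hocofib that measures precisely the finite-level commutation map $\hocolim_{\Delta^{\op}_{\le n}}\tilde\Omega^r \mathfrak{B}(X) \to \tilde\Omega^r \hocolim_{\Delta^{\op}_{\le n}}\mathfrak{B}(X)$.

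The main obstacle will be the last estimate: $\tilde\Omega^r$ naturally preserves only cartesianness, while what is ultimately needed is a cocartesianness bound on $\tilde\Omega^r \widetilde{\mathfrak{B}(X)}$, so the dual Blakers-Massey conversion must be bookkept carefully to confirm that the resulting estimate still grows in $n$ after all $r$-dependent losses. Once this linear growth is in hand, combining it with the approximation estimates through the factorization above completes the proof.
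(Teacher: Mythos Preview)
Your approach is correct and essentially the same as the paper's, but slightly over-elaborated. The paper's proof is more direct: because filtered colimits commute with homotopy groups, the reduction to the finite-level commutation map
\[
\hocolim_{(\Delta^{\op})^{\le n}}\tilde\Omega^r \mathfrak{B}(X) \to \tilde\Omega^r \hocolim_{(\Delta^{\op})^{\le n}}\mathfrak{B}(X)
\]
is immediate, with no need to separately control the two approximation maps in your factorization; there is no dual of Theorem~\ref{lim1} required here. The only substantive input is then the dual of Theorem~\ref{sigmaestimates}, which is precisely Theorem~\ref{estimates} in the paper, and your sketch of its proof (iterated dual Freudenthal on $\Sigmat^r X$ via Theorem~\ref{hdfsrs}, common section plus higher Blakers--Massey to get cartesianness, shift by $r$ under $\tilde\Omega^r$, then higher dual Blakers--Massey to recover cocartesianness) matches the paper's argument. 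One bookkeeping slip: the cocartesianness function for the iterated dual Freudenthal cube on $\Sigmat^r X$ is $((k+3)\id+k+r)$ from Theorem~\ref{hdfsrs}, not $(\id+1)(k-r+1)$; the latter is the shape of the Freudenthal-side estimate, not its dual.
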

\begin{proof}
Because filtered colimits commute with homotopy groups, by filtering $\Delta^{\op}$ by truncation it suffices to show that $$\hocolim_{(\Delta^{\op})^{\le n}}\tilde \Omega^r \mathfrak{B}(X) \to \tilde \Omega^r \hocolim_{(\Delta^{\op})^{\le n}} \mathfrak{B}(X)$$ has increasing connectivity in $n$. And by theorem \ref{estimates} that map is $(3n+5)$-connected.
\end{proof}

\begin{thm}
For $X$ a $0$-connected $\Loopt^r \Sigmat^r$-algebra, $ r>0$ the map \begin{align}X \to \tilde \Omega^r \hocolim_{\Delta^{\op}} \mathfrak{B}(X)\end{align} is an equivalence.
\end{thm}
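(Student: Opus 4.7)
My plan is to argue dually to the proof that $\Sigmat^r \holim_\Delta \mathfrak{C}(X) \to X$ is an equivalence (the third theorem in Section 2). The key observation is that $\tilde\Omega^r$ applied to the bar construction $\mathfrak{B}(X)$ is, up to the usual homotopical fattening, the two-sided bar construction
\begin{align}
\tilde\Omega^r \mathfrak{B}(X) \ \iso \ \Bar(\tilde\Omega^r\Sigmat^r,\, \tilde\Omega^r\Sigmat^r,\, X)
\end{align}
regarded as a simplicial $\tilde\Omega^r\Sigmat^r$-algebra. Since $X$ is itself a $\tilde\Omega^r\Sigmat^r$-algebra, this simplicial object carries an extra degeneracy map induced by the algebra structure map $\tilde\Omega^r\Sigmat^r X \to X$ at the $0$-th simplicial level, exactly dual to the extra codegeneracy exploited in the cobar case.

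The first step is to verify this extra degeneracy and appeal to the standard cofinality argument dual to Dror-Dwyer \cite[3.16]{Dror_Dwyer_long_homology}: the extra degeneracy forces the canonical map
\begin{align}
X \xrightarrow{\ \wequiv\ } \hocolim\nolimits_{\Delta^\op} \tilde\Omega^r \mathfrak{B}(X)
\end{align}
to be a weak equivalence. Of course this requires that the simplicial object be sufficiently cofibrant/Reedy-cofibrant for the homotopy colimit to see the extra degeneracy, which is where the standing cofibrancy and fibrant-replacement conventions set up in Section 6 should be invoked.

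Next, I would combine this with the preceding theorem, which says precisely that
\begin{align}
\hocolim\nolimits_{\Delta^\op} \tilde\Omega^r \mathfrak{B}(X) \xrightarrow{\ \wequiv\ } \tilde\Omega^r \hocolim\nolimits_{\Delta^\op} \mathfrak{B}(X)
\end{align}
is an equivalence for $X$ a $0$-connected $\tilde\Omega^r\Sigmat^r$-algebra with $r>0$. Composing these two equivalences then yields the desired equivalence $X \xrightarrow{\wequiv} \tilde\Omega^r \hocolim_{\Delta^\op} \mathfrak{B}(X)$, provided one checks that the composite agrees, up to homotopy, with the natural unit map arising from the derived adjunction in Theorem \ref{mainthm}.

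The main obstacle will be the last bookkeeping step: ensuring that the extra-degeneracy equivalence and the $\tilde\Omega^r$/$\hocolim$ commutation equivalence compose to the map actually named in the statement. This is essentially a naturality check tracing the unit of the $(\hocolim_{\Delta^\op}\mathfrak{B}, \tilde\Omega^r)$ adjunction through the augmentation of $\Bar(\tilde\Omega^r\Sigmat^r, \tilde\Omega^r\Sigmat^r, X)$; once the correct Reedy-cofibrancy and fibrancy are in place (so that each equivalence above is truly realized by the underived map), the identification is formal. Everything else is routine given the results already proved in Section 5 and the dual arguments worked out in Section 2.
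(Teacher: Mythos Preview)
Your proposal is correct and follows essentially the same approach as the paper's proof: identify $\tilde\Omega^r\mathfrak{B}(X)$ with $\BAR(\tilde\Omega^r\Sigmat^r,\tilde\Omega^r\Sigmat^r,X)$, use the extra degeneracy (a finality argument) to see $X\xrightarrow{\wequiv}\hocolim_{\Delta^\op}\tilde\Omega^r\mathfrak{B}(X)$, and then invoke the preceding theorem to commute $\tilde\Omega^r$ past the homotopy colimit. The paper does not spell out the compatibility check you flag at the end, but otherwise your outline matches it step for step.
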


\begin{proof}
First recognize that $\tilde \Omega^r\mathfrak{B}(X)=\BAR(\tilde \Omega^r\Sigmat^r, \tilde \Omega^r\Sigmat^r, X)$, which has an extra degeneracy map, and therefore by a finality argument $$X \to \hocolim_{\Delta^{\op}}\tilde \Omega^r \mathfrak{B}(X)$$ is an equivalence, and so the above theorem completes the proof.
\end{proof}

\begin{thm}
For $Y$ an $r$-connected $\capO$-algebra, $r \ge 1$ the map \begin{align} \hocolim_{\Delta^\op}\mathfrak{B}(\Loopt^r Y)\to Y \end{align} is an equivalence.
\end{thm}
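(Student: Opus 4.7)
The plan is to reduce to the two preceding theorems of this section via $\Loopt^r$, then lift the resulting $\Loopt^r$-equivalence to a full equivalence by a connectivity argument. Since $Y$ is $r$-connected with $r\ge 1$, the $\capO$-algebra $\Loopt^r Y$ is $0$-connected and carries a canonical $\Loopt^r\Sigmat^r$-algebra structure, so both preceding theorems apply to $X=\Loopt^r Y$.

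Applying $\Loopt^r$ to the map in question produces
\begin{align*}
\Loopt^r\hocolim_{\Delta^\op}\mathfrak{B}(\Loopt^r Y)\longrightarrow\Loopt^r Y.
\end{align*}
The first theorem of this section identifies the source, up to equivalence, with $\hocolim_{\Delta^\op}\Loopt^r\mathfrak{B}(\Loopt^r Y)$. At simplicial level $n$ this is $\Loopt^r\Sigmat^r(\Loopt^r\Sigmat^r)^n\Loopt^r Y=(\Loopt^r\Sigmat^r)^{n+1}\Loopt^r Y$, so $\Loopt^r\mathfrak{B}(\Loopt^r Y)$ is identified with $\BAR(\Loopt^r\Sigmat^r,\Loopt^r\Sigmat^r,\Loopt^r Y)$, which carries an extra degeneracy. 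The augmentation from this bar construction down to $\Loopt^r Y$ is therefore an equivalence by the finality argument used in the preceding theorem's proof. Composing, $\Loopt^r$ applied to our map is an equivalence.

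It remains to lift this to an equivalence of the original map. Each simplicial level $\Sigmat^r(\Loopt^r\Sigmat^r)^n\Loopt^r Y$ of $\mathfrak{B}(\Loopt^r Y)$ is $r$-connected: $\Loopt^r Y$ is $0$-connected, Freudenthal suspension (\ref{fsrs}) inductively shows $\Loopt^r\Sigmat^r$ preserves $0$-connectivity, and $\Sigmat^r$ raises connectivity by $r$. Since $\hocolim_{\Delta^\op}$ preserves $r$-connectivity, the source of our map is $r$-connected, as is the target $Y$. Because $\Loopt^r$ shifts spectrum-level homotopy groups down by $r$, a map between $r$-connected $\capO$-algebras is an equivalence if and only if its image under $\Loopt^r$ is, which finishes the proof.

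The main obstacle is the strict identification of $\Loopt^r\mathfrak{B}(\Loopt^r Y)$ with $\BAR(\Loopt^r\Sigmat^r,\Loopt^r\Sigmat^r,\Loopt^r Y)$ in a way that matches its augmentation with our $\Loopt^r$-augmentation, so that the equivalence established in the preceding theorem can be invoked directly. If this identification proves delicate on account of the fibrant/cofibrant replacements built into $\mathfrak{B}$ (see Remark \ref{frakB}), an alternative is to mirror the dual strategy of Section 2: filter $\Delta^\op$ by truncations and prove increasing-connectivity estimates via higher dual Blakers-Massey, paralleling Theorems \ref{lim1} and \ref{sigmaestimates}.
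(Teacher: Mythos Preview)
Your approach is correct and takes a genuinely different route from the paper's. The paper argues directly: it filters $\hocolim_{\Delta^\op}\mathfrak{B}(\Loopt^r Y)\to Y$ by truncation, identifies the connectivity at stage $n$ with the cartesianness of the iterated dual Freudenthal $(n+1)$-cube built from $\Sigmat^r\Loopt^r Y\to Y$, invokes higher dual Freudenthal (Theorem~\ref{hdfsrs}) for uniform cocartesianness, converts to cartesianness via higher Blakers--Massey, and observes that this increases in $n$. Your route instead passes through the two preceding theorems by applying $\Loopt^r$ and then lifting back. This is more economical: it repackages the hard connectivity work already absorbed into those theorems rather than rerunning it, and the compatibility of augmentations you flag as an obstacle is exactly what the proof of the second theorem already handles when it identifies $\Loopt^r\mathfrak{B}(X)$ with $\BAR(\Loopt^r\Sigmat^r,\Loopt^r\Sigmat^r,X)$, so it is no additional burden. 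In fact, you could shorten step one further: the second theorem gives that the unit $\Loopt^r Y\to\Loopt^r\hocolim_{\Delta^\op}\mathfrak{B}(\Loopt^r Y)$ is an equivalence, and the triangle identity then forces $\Loopt^r$(counit) to be one as well, without reinvoking the extra degeneracy. Your lifting step is also simpler than you make it: in the stable setting $\Loopt^r$ reflects \emph{all} weak equivalences (it just shifts homotopy groups), so the $r$-connectivity verification on source and target is unnecessary. The paper's direct approach, by contrast, produces explicit connectivity bounds at each truncation level. Your proposed fallback is precisely the paper's own argument.
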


\begin{proof}
As above we filter by truncation. By a cofinality argument the desired connectivity of the $n$-th level is the cartesianness of an iterated Freudenthal cube, whose uniform cocartesianness we understand via Higher Dual Freudenthal Suspension (\ref{hdfsrs}). Therefore by the immediately preceding theorem, we understand its cartesianness, which is increasing in $n$. Commuting homotopy past the filtered colimit completes the proof.
\end{proof}

\section{Connectivity Estimates for Dual Freudenthal}

\begin{thm}[Dual Freudenthal suspension for structured ring spectra]\label{dfsrs} Let $k \ge r>0$. For $X$ a fibrant $k$-connected $\mathcal{O}$-algebra, the map $\Sigmat^r\Loopt^rX \to X$ is $(2k+3-r)$-connected.
\end{thm}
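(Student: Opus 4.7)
The plan is to argue by induction on $r$, with the base case $r=1$ handled by a direct application of higher dual Blakers--Massey for structured ring spectra to a pullback square (dual in spirit to the direct pushout-square approach mentioned in the remark after Theorem \ref{fsrs}).

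For the base case, I would consider the 2-cube
\[
\xymatrix{\Loopt X \ar[r] \ar[d] & * \ar[d] \\ * \ar[r] & X}
\]
which is infinitely cartesian by construction of $\Loopt X$ as the iterated loop object (using that $X$ is fibrant). The homotopy pushout of its corner is $\Sigmat\Loopt X$, and the comparison map from this pushout to the terminal vertex $X$ is precisely the dual Freudenthal suspension map, so its connectivity equals the cocartesianness of the square. Since $X$ is $k$-connected with $k\ge 1$, $\Loopt X$ is $(k-1)$-connected, making each outgoing edge $\Loopt X\to *$ a $k$-connected map. Invoking higher dual Blakers--Massey for structured ring spectra \cite[1.11]{Ching_Harper} on this infinitely cartesian 2-cube will give that it is $(2k+2)$-cocartesian, which matches the desired bound $(2k+3-1)$.

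For the inductive step, assume the result for $r-1$ (which applies since $k\ge r\ge r-1$), and factor the counit as
\[
\Sigmat^r\Loopt^r X \Equal \Sigmat^{r-1}\bigl(\Sigmat\Loopt\,\Loopt^{r-1}X\bigr) \xrightarrow{f_1} \Sigmat^{r-1}\Loopt^{r-1}X \xrightarrow{f_2} X.
\]
Since $\Loopt^{r-1}X$ is $(k-r+1)$-connected with $k-r+1\ge 1$, the base case applied to $\Loopt^{r-1}X$ shows the inner dual Freudenthal map is $(2(k-r+1)+2) = (2k-2r+4)$-connected. Applying $\Sigmat^{r-1}$ raises connectivity by $r-1$, so $f_1$ is $(2k-r+3)$-connected, while the inductive hypothesis gives $f_2$ is $(2k+3-(r-1)) = (2k-r+4)$-connected. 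The composition $f_2\circ f_1$ is therefore at least $(2k+3-r)$-connected.

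The main obstacle will be the base case numerics: confirming that higher dual Blakers--Massey for structured ring spectra applied to an infinitely cartesian 2-cube with both outgoing edges $k$-connected yields precisely $(2k+2)$-cocartesian rather than a weaker bound. This depends on the exact formulation in \cite[1.11]{Ching_Harper} and leverages the stable-range estimates available in the structured ring spectra setting beyond what a naive space-level Blakers--Massey would give; a secondary bookkeeping concern is ensuring that the hypothesis $k\ge r$ correctly forces all the connectivity conditions needed at each stage of the induction.
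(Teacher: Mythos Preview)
Your proof is correct and follows essentially the same route as the paper: both induct on $r$, handle $r=1$ by applying dual Blakers--Massey to the loop pullback square, and then factor the general counit as $\Sigmat^r\Loopt^rX \to \Sigmat^{r-1}\Loopt^{r-1}X \to X$ with identical numerics. The only cosmetic differences are that the paper writes the square with the contractible path object $PX$ in place of one of your $*$'s (so that $PX\to X$ is an honest fibration) and cites the 2-cube dual Blakers--Massey \cite[1.9]{Ching_Harper} rather than the higher version \cite[1.11]{Ching_Harper}; your flagged concern about the $(2k+2)$ numerics is precisely the content of that citation.
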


\begin{proof}
We proceed by induction. When $r=1$ this is an application of dual Blakers-Massey theorem for structured ring spectra\cite[1.9]{Ching_Harper} to the homotopy pullback square
\begin{align}
\xymatrix{
  \Loopt X \ar[r] \ar[d] & PX \ar[d] \\
  {*} \ar[r] & X
}
\end{align}

For $r>1$, we apply the $r=1$ case to the $(k-r+1)$-connected space $\Loopt^{r-1} X$ to get that $\Sigmat \Loopt^rX \to  \Loopt^{r-1}X$ is $(2k-2r+4)$-connected, and then suspending this map $r-1$ times gives us that $\Sigmat^r\Loopt^rX \to \Sigmat^{r-1}\Loopt^{r-1}X$ is $(2k+3-r)$-connected. By induction $\Sigmat^{r-1}\Loopt^{r-1}X \to X$ is $(2k+4-r)$-connected, and so their composition has the desired connectivity.
\end{proof}

\begin{prop}
An $n$-cube in $\mathcal{O}$-algebras is $((k+3-r)\id+k)$-cocartesian iff it is $((k+2-r)\id+k+1)$-cartesian for $k\ge r>0$.
\end{prop}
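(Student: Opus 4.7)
My plan is to prove this in direct analogy with Proposition \ref{blah}, by invoking the higher Blakers--Massey and higher dual Blakers--Massey theorems for structured ring spectra (Ching--Harper 1.7 and 1.11). For the forward direction I would assume the $n$-cube $\mathcal{X}$ is $((k+3-r)\id+k)$-cocartesian, i.e.\ every $d$-dimensional subface is $((k+3-r)d+k)$-cocartesian, and feed this into higher Blakers--Massey. The output is a cartesianness estimate expressed as a minimum, over partitions $\lambda$ of the $n$-element indexing set, of a linear combination of the cocartesianness numbers of the parts. For the reverse direction, I would symmetrically apply higher dual Blakers--Massey to deduce the cocartesianness estimate from the cartesianness hypothesis. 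Inducting on dimension (and working face by face) then upgrades these ``whole cube'' bounds to ``every subface'' bounds, so that the two estimates are interconvertible.

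The key technical point, exactly as in the proof of Proposition \ref{blah}, is to verify that under the assumption $k\ge r>0$ the optimal partition is the trivial one (into singletons), so that the Blakers--Massey minimum is realized and equals precisely $((k+2-r)n+k+1)$; likewise for the dual side. This reduces to a monotonicity check: the slope $k+3-r \ge 3$ of the cocartesianness function $d \mapsto (k+3-r)d+k$ is large enough that merging two parts of a partition into one larger part weakens the resulting cartesianness estimate, so the finest partition dominates. Substituting the trivial partition into the Blakers--Massey formula, and doing the same for the dual statement, yields the claimed linear functions on both sides.

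The main obstacle is purely bookkeeping: confirming that the trivial partition is the minimizer for this particular pair of linear functions (the arithmetic is essentially forced, but must be checked to land on $((k+2-r)\id+k+1)$ exactly and not something off by one), and tracking the $+1$'s appearing in the Blakers--Massey formulas. Once this is settled, both implications follow in one line each from \cite[1.7, 1.11]{Ching_Harper}, and the proof is just as short as that of Proposition \ref{blah}.
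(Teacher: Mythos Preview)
Your approach is exactly the paper's: both directions are immediate from higher Blakers--Massey and its dual \cite[1.7, 1.11]{Ching_Harper}, with the minimum over partitions realized by a single ``trivial'' partition, and the induction over subfaces is implicit in the uniform hypothesis.

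One correction to your bookkeeping: the minimizing partition is the \emph{coarsest} one (the whole set as a single block), not the partition into singletons. Because the cocartesianness function $d \mapsto (k+3-r)d + k$ has positive constant term $k$, merging two parts of sizes $a,b$ replaces $(k+3-r)a+k+(k+3-r)b+k$ by $(k+3-r)(a+b)+k$, \emph{decreasing} $\sum_{T\in\lambda} k_T$ by $k$; hence the Blakers--Massey minimum is achieved at the one-block partition, and that is what produces $(k+2-r)n+k+1$. The same monotonicity (with constant term $k+1>0$) handles the dual direction. Your own arithmetic check would have caught this, and it does not affect the structure of the argument.
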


\begin{proof}
This is immediate from higher Blakers-Masey  and higher dual Blakers-Massey for structured ring spectra \cite[1.7, 1.11]{Ching_Harper}, since the minimal partitions will be the trivial ones.
\end{proof}

\begin{thm}\label{hdfsrs}
Let $\mathcal{X}$ be a $W$-cube in $\mathcal{O}$-algebras, $|W|=n$. For $k\ge r>0$, if $\mathcal{X}$ is $((k+3-r)\id+k)$-cocartesian, then the $(n+1)$-cube $\Sigmat^r\Loopt^r\mathcal{X} \to \mathcal{X}$ is $((k+3-r)\id+k)$-cocartesian.
\end{thm}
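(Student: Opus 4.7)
The plan is to induct on $n$, dualizing the proof of Theorem \ref{hfsrs} by swapping the roles of cartesianness and cocartesianness throughout. The base case $n = 0$ is exactly Theorem \ref{dfsrs}: a $0$-cube $X$ being $k$-cocartesian means $X$ is $k$-connected, and the conclusion that the $1$-cube $\Sigmat^r\Loopt^r X \to X$ be $((k+3-r)(1)+k)=(2k+3-r)$-cocartesian is exactly the $(2k+3-r)$-connectivity statement of \ref{dfsrs}.

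For the inductive step I would, without loss of generality, assume $\mathcal{X}$ is a fibration cube (dual to the use of \cite[I.2.5]{Chacholski_Scherer} in \ref{hfsrs}). Let $F := thofib(\mathcal{X})$ and let $\mathcal{F}$ be the $W$-cube with $\mathcal{F}|_\emptyset := F$ and $\mathcal{F}|_T := *$ for $T \neq \emptyset$; by construction the natural comparison $\mathcal{F} \to \mathcal{X}$ forms an infinitely cartesian $(n+1)$-cube. I would then analyze the diagram of $W$-cubes
\begin{align*}
\xymatrix{
\Sigmat^r\Loopt^r \mathcal{F} \ar[r] \ar[d] & \mathcal{F} \ar[d] \\
\Sigmat^r\Loopt^r \mathcal{X} \ar[r] & \mathcal{X}
}
\end{align*}
extracting cocartesianness of the bottom $(n+1)$-cube from the other three sides. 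First, by the preceding proposition, $\mathcal{X}$ being $((k+3-r)\id+k)$-cocartesian is equivalent to being $((k+2-r)\id+k+1)$-cartesian, so $F$ is $((k+2-r)n+k+1)$-connected; Theorem \ref{dfsrs} applied to $F$, combined with the fact that $\mathcal{F}$ is concentrated at a single vertex (so its total cofiber is an iterated suspension of $F$), translates into a strong cocartesian estimate on $\Sigmat^r\Loopt^r\mathcal{F} \to \mathcal{F}$. Second, $\mathcal{F} \to \mathcal{X}$ is infinitely cartesian, and combined with the cartesian face estimates on $\mathcal{X}$, higher dual Blakers-Massey \cite[1.11]{Ching_Harper} produces a cocartesian estimate on this side. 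Third, $\Sigmat^r\Loopt^r\mathcal{F} \to \Sigmat^r\Loopt^r\mathcal{X}$ is treated by first applying $\Loopt^r$ to the cartesian estimates (with a shift of $r$), converting to cocartesian via Blakers-Massey, then applying $\Sigmat^r$ (which adds $r$ to cocartesian estimates). The standard three-sides-imply-the-fourth property of cocartesian cubes (dual to the closing step of \ref{hfsrs}) then extracts the desired estimate on $\Sigmat^r\Loopt^r\mathcal{X} \to \mathcal{X}$, and repeating the argument on all faces completes the induction.

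The hard part will be the numerical bookkeeping: the target bound $((k+3-r)\id+k)$ is tight, so I must carefully track the effect of $\Loopt^r$ (lowering cartesianness by $r$), $\Sigmat^r$ (raising cocartesianness by $r$), and the multiple (dual) higher Blakers-Massey conversions, ensuring that the minimum of the three contributing cocartesian estimates still meets $((k+3-r)(n+1)+k)$ on the top-dimensional face. The $-r$ in the final estimate is ultimately sourced from the $\Loopt^r$ applied along the left edge of the square.
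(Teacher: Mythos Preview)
Your proposal is essentially identical to the paper's proof: same induction on $n$, same auxiliary cube $\mathcal{F}$ concentrated at the total homotopy fiber (the paper writes $\mathcal{F}|_W = F$ rather than $\mathcal{F}|_\emptyset = F$, reflecting its opposite indexing convention for cubes, but substantively $F$ sits at the initial vertex in both so that $\mathcal{F}\to\mathcal{X}$ is $\infty$-cartesian), and the same three-out-of-four analysis of the square of $(n+1)$-cubes. The only slips are cosmetic---you want \emph{dual} higher Blakers--Massey when passing from cartesian face data to a cocartesian estimate on $\Loopt^r\mathcal{F} \to \Loopt^r\mathcal{X}$, and the connectivity of $F$ is $((k+2-r)n+k)$ rather than $((k+2-r)n+k+1)$---but the strategy and the final bounds match the paper exactly.
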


\begin{proof}
We induct on $n$. If $n=0$ this is the dual Freudenthal suspension theorem for structured ring spectra.

For $n>0$, without loss of generality, we may assume $\mathcal{X}$ is a fibration cube. Let $F:=thofib(\mathcal{X})$ and let $\mathcal{F}$ be the $W$-cube defined as $\mathcal{F}|_W=F$ and $\mathcal{F}|_T=*$ for $T \neq W$. Consider the following diagram of cubes:
\begin{align}
\xymatrix{
  \Sigmat^r\Loopt^r \mathcal{F} \ar[r] \ar[d] & \mathcal{F} \ar[d] \\
  \Sigmat^r\Loopt^r\mathcal{X} \ar[r] & \mathcal{X}
}
\end{align}

We want to know the cocartesianness of the bottom cube, and we can do this by studying the other three. First, since $\mathcal{X}$ is $((k+3-r)\id+k)$-cocartesian by assumption, it is $((k+2-r)\id+k+1)$-cartesian by the above theorem, and thus so is $\mathcal{F}$. Therefore, $F$ is $((k+2-r)n+k)$-connected, and so by dual Freudenthal suspension for structured ring spectra, $\Sigmat^r\Loopt^rF \to F$ is $((k+2-r)(2n)+2k+1)$-connected. Since the total cofiber of $\Sigmat^r\Loopt^r \mathcal{F} \to \mathcal{F}$ is equivalent to the fiber of $\Sigmat^{r+n}\Loopt^rF \to \Sigmat^nF$, it is $((k+2-r)(2n)+2k+(n+1))$-cocartesian. Note that $(k+2-r)(2n)+2k+(n+1) \ge (k+2-r)(n+1)+2k+(n+1)=(k+3-r)(n+1)+2k>(k+3-r)(n+1)+k$.

Next, we consider $\mathcal{F} \to \mathcal{X}$. This is an infinitely cartesian cube, and we also know that a $d$-dimensional face of $\mathcal{X}$ is $((k+2-r)d+k+1)$-cartesian. So a $(d+1)$-dimensional face is $((k+2-r)(d+1)+k+1)$-cartesian, and a $(d+1)$-dimensional face of $\mathcal{F} \to \mathcal{X}$ made up of a map of $d$-dimensional faces of $\mathcal{F}$ and $\mathcal{X}$ is $((k+2-r)d+k)$-cartesian. Therefore, since the minimal partition will be one of each of these two types of faces, by higher dual Blakers-Massey for structured ring spectra $\mathcal{F} \to \mathcal{X}$ is $((k+2-r)n+2k+2+n)$-cocartesian, and $(k+2-r)n+2k+2+n=(k+3-r)n+2k+2 \ge (k+2-r)n+k+(k+3-r)=(k+3-r)(n+1)+k$.

Finally, to deal with $\Sigmat^r \Loopt^r\mathcal{F} \to \Sigmat^r \Loopt^r\mathcal{X}$, we first apply $\Loopt^r$ to our above cartesian estimates, to give faces of $((k+2-r)(d+1)+k+1-r)$-cartesian and $((k+2-r)d+k-r)$-cartesian, and for the same reasons higher dual Blakers-Massey for structured ring spectra tells us that $ \Loopt^r\mathcal{F} \to  \Loopt^r\mathcal{X}$ is $((k+2-r)n+2k+2+n-2r)$-cocartesian, and thus $\Sigmat^r \Loopt^r\mathcal{F} \to \Sigmat^r \Loopt^r\mathcal{X}$ is $((k+2-r)n+2k+2+n-r)$-cocartesian. Note $(k+2-r)n+2k+2+n-r=(k+2-r)n+k+(k+2-r)+(n+1)-1=(k+2-r)(n+1)+k+(n+1)-1=(k+3-r)(n+1)+k-1$.

Therefore, we conclude that $\Sigmat^r\Loopt^r \mathcal{X} \to \mathcal{X}$ is $((k+3-r)(n+1)+k)$-cocartesian. Repeating this argument on all faces completes the proof.
\end{proof}

\begin{thm}\label{estimates}
Let $X$ be an $\Loopt^r \Sigmat^r$-algebra in $\mathcal{O}$-algebras, $k\ge 0$, and $r, n \ge 1$. Consider the infinitely cocartesian $(n+1)$-cube $\widetilde{\mathfrak{B}(X)}$. If $X$ is $k$-connected then $\Loopt^r \widetilde{\mathfrak{B}(X)}$ is $((k+3)(n+1)+2k+2)$-cocartesian
\end{thm}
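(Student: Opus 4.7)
The plan is to dualize the proof of Theorem \ref{sigmaestimates} throughout. Everywhere cartesian appears, cocartesian is used instead; $\Sigmat^r$ is replaced by $\Loopt^r$; higher Freudenthal \ref{hfsrs} is replaced by higher dual Freudenthal \ref{hdfsrs}; and higher Blakers--Massey and higher dual Blakers--Massey for structured ring spectra \cite[1.7, 1.11]{Ching_Harper} trade places.

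First I would identify one initial face of $\widetilde{\mathfrak{B}(X)}$ as the cube built by iterating the dual Freudenthal suspension map $\Sigmat^r\Loopt^r(-)\to(-)$ on $\Sigmat^r X$. Since $X$ is $k$-connected with $k\ge 0$, we have $\Sigmat^r X$ is $(k+r)$-connected, which trivially satisfies the hypothesis $k+r\ge r$ of \ref{hdfsrs}. Applying that theorem with connectivity parameter $k+r$ shows this face is $((k+3)\id+(k+r))$-cocartesian. By construction of the bar resolution, $\widetilde{\mathfrak{B}(X)}$ is moreover infinitely cocartesian.

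Next, a common section argument dual to the one from \cite{Ching_Harper_derived_Koszul_duality} used in \ref{sigmaestimates}, together with higher Blakers--Massey \cite[1.7]{Ching_Harper} applied to the two cocartesian estimates just established, should produce both a global cartesian estimate on $\widetilde{\mathfrak{B}(X)}$ and a uniform per-face cartesian estimate; these play the dual role of the two cocartesian estimates in \ref{sigmaestimates}. Applying $\Loopt^r$, which decreases cartesianness by exactly $r$ since it commutes with total fiber up to shift, yields the same two estimates with $r$ subtracted, now for the cube $\Loopt^r\widetilde{\mathfrak{B}(X)}$.

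Finally, higher dual Blakers--Massey \cite[1.11]{Ching_Harper} applied to these cartesian estimates gives the desired cocartesianness of $\Loopt^r\widetilde{\mathfrak{B}(X)}$; as in the prior proofs, the minimal partition should be the trivial one built from the two kinds of faces produced above, so the arithmetic simplifies to the claimed $((k+3)(n+1)+2k+2)$-cocartesian bound. The main obstacle is the bookkeeping: three cartesian/cocartesian conversions occur, and one must verify at each stage that the minimal partition in the relevant Blakers--Massey theorem is indeed trivial, so that the bound comes out in closed form. This is a direct dualization of the numerology in \ref{sigmaestimates}, but the slopes and intercepts of the linear functions in face dimension are different from the Freudenthal case and therefore need to be recomputed rather than simply reindexed.
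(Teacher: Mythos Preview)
Your proposal is correct and follows essentially the same route as the paper: identify an initial face as the iterated dual Freudenthal cube on $\Sigmat^r X$ and apply \ref{hdfsrs} to get it $((k+3)\id+k+r)$-cocartesian, combine this with infinite cocartesianness and the common section argument to obtain both a global and a per-face cartesian estimate on $\widetilde{\mathfrak{B}(X)}$, shift these down by $r$ under $\Loopt^r$, and finish with higher dual Blakers--Massey. The paper records the intermediate numerics---$((k+2)(n+1)+2k+2r+2)$-cartesian and $((k+2)\id+k+r+1)$-cartesian before looping---which you should fill in, but the strategy is identical.
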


\begin{proof}
We know that one of the initial faces of $\widetilde{\mathfrak{B}(X)}$ is built from iterating the dual Freudenthal suspension map on $\Sigmat^r X$, and thus by the higher dual Freudenthal suspension theorem for structured ring spectra is $((k+3)\id+k+r)$-cocartesian. We also know that $\widetilde{\mathfrak{B}(X)}$ is infinitely cocartesian. Along with a common section argument \cite{Ching_Harper_derived_Koszul_duality} this shows that $\widetilde{\mathfrak{B}(X)}$ is $((k+2)(n+1)+2k+2r+2)$- cartesian and a standard higher Blakers-Massey argument gets that it is also $((k+2)\id+k+r+1)$-cartesian. Then $\Loopt^r\widetilde{\mathfrak{B}(X)}$ is $((k+2)(n+1)+2k+r+2)$- cartesian and also $((k+2)\id+k+1)$-cartesian immediately from the fact that $\Loopt^r$ just decreases cartesianness by $r$. Finally, an application of higher dual Blakers-Massey for structured ring spectra completes the proof.
\end{proof}

\section{Technical Results}

\subsection{Suspension in $\AlgO$}

Quillen defines a notion of suspension for any model category via a homotopy pushout. But it will be helpful to have an explicit adjunction on the level of model categories.

Recall the action of $\Space_*$ on $\AlgO$:

\begin{defn}
For $\capO$ a reduced operad in $\mathcal{R}$-modules, $X,Y \in \AlgO$, $K \in \sSet_*$ define tensor hat $X \hat{\otimes} K$ in $\AlgO$ by the coequalizer 
\begin{align}
  X \hat{\otimes} K :=  \colim \Bigl(
  \xymatrix{
    \capO \circ (X \wedge K) &
    \capO \circ ((\capO \circ X) \wedge K)
    \ar@<0.5ex>[l]^(0.55){d_1}
    \ar@<-0.5ex>[l]_(0.55){d_0}
  }
  \Bigr)
\end{align}
in $\AlgO$ with $d_0=m \id \circ v$ for $v: (\mathcal{O} \circ X) \wedge K \to \mathcal{O} \circ (X \wedge K)$, which is induced by diagonal maps $K \to K^{\wedge n}$, analogously to the corresponding map in \cite{EKMM}, and $d_1=\id (m \wedge \id)$.

We also have a mapping object, $\Map(K, Y)$, defined as $\Map(\mathcal{R} \otimes G_0(K), Y)$ in $\Mod_\mathcal{R}$ with $\capO$-algebra structure induced from $Y$..
\end{defn}

The following theorem, analogous to \cite{EKMM} (see also \cite{Harper_Hess}) will be useful.

\begin{prop}\label{adjunction}
For $\capO$ a reduced operad in $\mathcal{R}$-modules, $X,  Y \in \AlgO$, $K \in \Space_*$, there is an isomorphism $$\hom_\AlgO (X \hat{\otimes} K, Y) \iso \hom_\AlgO(X, \Map (K, Y))$$ natural in $X, K, Y$.
\end{prop}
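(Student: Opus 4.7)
The plan is to establish the isomorphism by unwinding the coequalizer defining $X\hat{\otimes}K$ together with the free/forgetful adjunction $\capO\circ(-)\dashv U$, and reducing everything to the underlying module-level tensor/mapping adjunction.

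First, by the universal property of the coequalizer, a map $X \hat{\otimes} K \to Y$ in $\AlgO$ is the same as a map $f: \capO \circ (X \wedge K) \to Y$ in $\AlgO$ satisfying $f \circ d_0 = f \circ d_1$. By the free/forgetful adjunction for $\capO$-algebras over $\Mod_\mathcal{R}$, such an $f$ corresponds bijectively to a module map $\phi: X \wedge K \to Y$, and likewise the two maps being equalized correspond to two module maps $(\capO \circ X) \wedge K \to Y$. Under this correspondence, the coequalizer condition $f d_0 = f d_1$ becomes an equation between two module maps: on one side we precompose $\phi$ with $v$ and follow by the $\capO$-action on $Y$ (from $d_0 = (m\,\id)\circ v$), on the other we precompose $\phi$ with $m \wedge \id$ (from $d_1 = \id(m\wedge \id)$).

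Next, apply the module-level adjunction $(-) \wedge K \dashv \Map(K, -)$ (which is exactly the adjunction behind the definition $\Map(K,Y) = \Map(\mathcal{R}\otimes G_0(K),Y)$ in $\Mod_\mathcal{R}$) to turn $\phi$ into $\tilde{\phi}: X \to \Map(K, Y)$. The $\capO$-algebra structure on $\Map(K, Y)$ is, by construction, the one transposed from the diagonal $K \to K^{\wedge n}$ composed with the $\capO$-action on $Y$; that is precisely what is packaged into the map $v$. Unwinding, the $d_0$-side becomes the composite defining the $\capO$-action on $\Map(K, Y)$ applied to $\tilde{\phi}$, while the $d_1$-side becomes $\tilde{\phi}$ precomposed with the $\capO$-action on $X$. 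Hence $f d_0 = f d_1$ is equivalent to $\tilde{\phi}$ being a map of $\capO$-algebras, i.e., an element of $\hom_\AlgO(X, \Map(K, Y))$.

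The main obstacle, though purely formal, is this last matching step: verifying that the $\capO$-algebra structure on $\Map(K,Y)$ induced from $Y$ and the diagonal on $K$ reassembles to exactly the right-hand map in the coequalizer, so that the two sides $f d_0$ and $f d_1$ transpose to the two legs of the $\capO$-algebra compatibility square for $\tilde{\phi}$. This requires careful tracking of the diagonal $v$, the operadic multiplication $m$, and the smash-mapping adjunction at the level of $\mathcal{R}$-modules. Naturality in $X$, $K$, and $Y$ is inherited from the naturality of the constituent adjunctions and coequalizer, so no extra argument is needed there.
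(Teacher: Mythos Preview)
Your proposal is correct and follows essentially the same approach as the paper: both arguments pass through the coequalizer description of $X\hat{\otimes}K$, the free/forgetful adjunction $\capO\circ(-)\dashv U$, and the module-level $(-)\wedge K\dashv\Map(K,-)$ adjunction, and both reduce the question to checking that the coequalizer condition $fd_0=fd_1$ transposes to the $\capO$-algebra compatibility square for $\tilde{\phi}$. The only difference is packaging: the paper isolates the ``careful tracking of $v$, $m$, and the smash--mapping adjunction'' you mention into a separate technical lemma (the commutative diagram immediately preceding the proposition) and then invokes it in each direction, whereas you describe that verification in prose.
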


We begin with a technical result necessary for the above proposition. The proof is a formal argument.

\begin{prop}
Let $\capO$ be an operad in $\mathcal{R}$ modules, $X \in \Mod_\mathcal{R}$, $Y \in \AlgO$, and $K \in \Space_*$. For a map $f:X \wedge K \to Y$ in $\Mod_\mathcal{R}$ the diagram
\begin{align}
\xymatrix{
(\capO \circ X) \wedge K \ar[d]^{\nu} \ar[r]^(.41){\id \circ f \wedge \id} & (\capO \circ \Map (K, Y)) \wedge K \ar[r]^{\nu} & \capO \circ (\Map (K, Y) \wedge K) \ar[d]^{\id \circ \ev} \\
\capO \circ ( X \wedge K) \ar[rr]^{\id \circ f} & & \capO \circ Y
}
\end{align}
commutes in $\Mod_\mathcal{R}$.
\end{prop}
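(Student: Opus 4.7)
The plan is to recognize the diagram as an instance of naturality of $\nu$ followed by an application of the adjunction identity defining the mapping object. First, I would interpret the top horizontal arrow $\id\circ f\wedge\id$ as $\id_{\capO}\circ\hat f\wedge\id_K$, where $\hat f\colon X\to\Map(K,Y)$ is the $\Mod_{\mathcal{R}}$-adjoint of $f\colon X\wedge K\to Y$ under the tensor-hat/mapping-object adjunction; this is the only reading that makes the source and target of that arrow match.

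Next, I would invoke naturality of the transformation $\nu\colon(\capO\circ(-))\wedge K\to\capO\circ((-)\wedge K)$ along the morphism $\hat f\colon X\to\Map(K,Y)$ of $\mathcal{R}$-modules. This gives the commuting identity
\begin{align*}
\nu_{\Map(K,Y)}\circ\bigl((\id_{\capO}\circ\hat f)\wedge\id_K\bigr)
=(\id_{\capO}\circ(\hat f\wedge\id_K))\circ\nu_X
\end{align*}
in $\Mod_{\mathcal{R}}$. Postcomposing both sides with $\id_{\capO}\circ\ev$ and invoking the defining adjunction identity $\ev\circ(\hat f\wedge\id_K)=f$ converts the right-hand composite to $(\id_{\capO}\circ f)\circ\nu_X$, which is precisely the bottom path of the diagram. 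Thus the two ways around the square agree in $\Mod_{\mathcal{R}}$.

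The only substantive content is checking naturality of $\nu$. Unpacking $\nu$ at level $n$ as $\id_{\capO[n]}\wedge\id_{A^{\wedge n}}\wedge\Delta$ followed by the symmetry shuffle that interleaves the $n$ copies of $K$ with the $n$ copies of $A$, naturality in $A$ reduces to naturality of the iterated diagonal $\Delta\colon K\to K^{\wedge n}$ and of the shuffle with respect to $\hat f^{\wedge n}$, each of which is a standard property of the symmetric monoidal structure on $\Mod_{\mathcal{R}}$. I do not expect any real obstacle here; as the author flags, the argument is purely bookkeeping in the symmetric monoidal structure and the adjunction identity.
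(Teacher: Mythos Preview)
Your proposal is correct and is exactly the formal argument the paper has in mind; the paper itself gives no details beyond the phrase ``The proof is a formal argument,'' and what you have written spells out precisely that argument via naturality of $\nu$ and the adjunction identity $\ev\circ(\hat f\wedge\id_K)=f$. Your observation that the label $\id\circ f\wedge\id$ must be read as $\id_{\capO}\circ\hat f\wedge\id_K$ is also correct and consistent with the paper's own convention, stated in the proof of Proposition~\ref{adjunction}, of using $f$ to denote both the map and its adjoints.
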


\begin{proof}[Proof of Proposition \ref{adjunction}]
First assume that $f:X \hat{\otimes} K \to Y$ is a map in $\AlgO$. Using $f$ to also denote $\capO \circ (X \wedge K) \to Y$ in $\AlgO$ and the adjoints $X \wedge K \to Y$ and $X \to \Map (K, Y)$ in $\Mod_\mathcal{R}$. The above proposition shows that
\begin{align}
\xymatrix{
(\capO \circ X) \wedge K \ar[r]^(.55){m \wedge \id} \ar[d]^{\id \circ f \wedge \id} & X \wedge K \ar[r]^(.38){f \wedge \id} & \Map (K, Y) \wedge K \ar[dd]^{\ev} \\
(\capO \circ \Map (K, Y)) \wedge K \ar[d]^{\nu} & & \\
\capO \circ (\Map (K, Y) \wedge K) \ar[r]^(.69){\id \circ \ev} & \capO \circ Y \ar[r]^{m} &  Y
}
\end{align}
commutes in $\Mod_\mathcal{R}$ and thus $f:X \to \Map(K, Y)$ is a map in $\AlgO$.

Now let $f: X \to \Map(K, Y)$ be a map in $\AlgO$ and consider $f:X \wedge K \to Y$ in $\Mod_\mathcal{R}$. We wish to show that the adjoint map $f:\capO \circ (X \wedge K) \to Y$ in $\AlgO$ induces a map $f: X \hat{\otimes} K \to Y$. Applying $\capO \circ (-)$ to the diagram in the above proposition does this and thus finishes the proof.
\end{proof}

For this construction to be homotopically useful, we need to know that the mapping object and tensor hat play nicely with the homotopy theory.

\begin{prop}
If $i:K \to L$ is a cofibration in $\Space_*$ and $p:X \to Y$ is a fibration in $\AlgO$, then the pullback corner map $$\Map(L, X) \to \Map (K, X) \times_{\Map (K, Y)} \Map (L, Y)$$ is a fibration, which is a weak equivalence if either $i$ or $p$ are.
\end{prop}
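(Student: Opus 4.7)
The plan is to recognize this as the SM7-type pushout–product axiom for the action of $\Space_*$ on $\AlgO$, and to reduce it to the corresponding statement in $\Mod_\mathcal{R}$ via the adjunction of Proposition \ref{adjunction}. Explicitly, for any map $j\colon A \to B$ in $\AlgO$, a lifting problem between $j$ and the pullback corner map is adjoint to a lifting problem between $p\colon X \to Y$ and the pushout corner map
\begin{align*}
j\, \hat{\square}\, i \colon A \tensorhat L \coprod_{A \tensorhat K} B \tensorhat K \longrightarrow B \tensorhat L.
\end{align*}
So it is enough to show that $j\, \hat{\square}\, i$ is a cofibration in $\AlgO$, and is a trivial cofibration whenever $j$ or $i$ is additionally a weak equivalence.

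By the small object argument it suffices to verify this when $j$ is a generating (trivial) cofibration for $\AlgO$ in the positive flat stable model structure, hence of the form $\capO \circ f$ with $f$ a generating (trivial) cofibration in $\Mod_\mathcal{R}$. The key computational input is that the tensor-hat of a free $\capO$-algebra is again free: composing the adjunction of Proposition \ref{adjunction} with the free–forget adjunction and applying Yoneda yields a natural isomorphism
\begin{align*}
(\capO \circ A) \tensorhat K \iso \capO \circ (A \wedge K).
\end{align*}
Consequently $(\capO \circ f)\, \hat{\square}\, i$ is identified with $\capO \circ (f \square i)$, where $f \square i$ is the ordinary pushout–product in $\Mod_\mathcal{R}$. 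The pushout–product axiom for the positive flat stable model structure on symmetric spectra then gives that $f \square i$ is a (trivial) cofibration in $\Mod_\mathcal{R}$, and the free functor $\capO \circ (-)$ sends cofibrations to cofibrations in $\AlgO$, handling the fibration case and the trivial fibration case when $p$ is a weak equivalence.

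The main obstacle is the trivial cofibration case when $f$ is merely a cofibration but $i$ is a trivial cofibration, since one must then propagate a levelwise trivial cofibration of $\mathcal{R}$-modules through the free functor $\capO \circ (-)$, and further through pushouts, into a trivial cofibration in $\AlgO$. This is where the positive flat stable model structure earns its keep: the standard filtration of a pushout along a free map by symmetric-sequence level constructions on $\capO$ has each filtration quotient a stable equivalence on such inputs, exactly as in the construction of the transferred model structure on $\AlgO$. Combining this with a retract argument and the small object closure properties of (trivial) cofibrations then completes the verification on general (trivial) cofibrations $j$ in $\AlgO$, and hence establishes the desired lifting properties for the pullback corner map.
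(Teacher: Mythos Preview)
Your argument is correct, but it takes a substantially longer route than the paper's. The paper exploits the crucial fact that the mapping object $\Map(K,Y)$ is \emph{defined} at the level of $\Mod_\mathcal{R}$ as $\Map(\mathcal{R}\otimes G_0(K),Y)$, with the $\capO$-algebra structure simply inherited from $Y$. Since the forgetful functor $\AlgO\to\Mod_\mathcal{R}$ creates fibrations, weak equivalences, and limits, the pullback corner map in $\AlgO$ is a (trivial) fibration if and only if its underlying map in $\Mod_\mathcal{R}$ is one. There it is the pullback corner map for the internal hom against $\mathcal{R}\otimes G_0(i)$, and the result follows immediately from $\Mod_\mathcal{R}$ being a monoidal model category together with $\mathcal{R}\otimes G_0(-)$ preserving cofibrations. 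No analysis of $\tensorhat$, free $\capO$-algebras, or filtrations is needed.

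Your approach works on the left adjoint side instead, and that forces you to wrestle with $\AlgO$-level colimits. It goes through, but your ``main obstacle'' paragraph is more than is needed: once the model structure on $\AlgO$ exists, $\capO\circ(-)$ is left Quillen and therefore already sends the trivial cofibration $f\,\square\,i$ to a trivial cofibration; the filtration argument you sketch is precisely what was used to \emph{construct} that model structure, and need not be rerun here. The virtue of your route is that it makes the simplicial model category structure on $\AlgO$ explicit via the pushout--product form, whereas the paper's one-line argument bypasses $\tensorhat$ entirely by working on the side where everything is created in $\Mod_\mathcal{R}$.
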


\begin{proof}
We use the fact that $\Mod_\mathcal{R}$ is a monoidal model category and that $\mathcal{R} \otimes G_0(-)$ preserves cofibrations.
\end{proof}

It's a quick formal proof to see that $- \hat{\otimes} S^1$ agrees with Quillen's notion of suspension.

\subsection{Homotopical Comonad}

Because of the required replacement functors, $\Sigmat^r \Loopt^r$ will not be an on the nose comonad. However, it will be a homotopical comonad, which has enough structure to give the required cobar construction. For more detail about these ideas, see \cite{Blumberg_Riehl}.

First, we show that $\Sigma^r C \Omega^r$ is a comonad. Given the adjunction \ref{adjunction}, we have a unit and counit map $$ \id \to \Omega^r \Sigma^r \quad \mathrm{and} \quad \Sigma^r \Omega^r \to \id$$ and the cofibrant replacement functor $C$ is a comonad, so has maps $$ C \to \id \quad \mathrm{and} \quad C \to CC$$ Therefore we can define $$\epsilon : \Sigma^r C \Omega^r \to \Sigma^r \id \Omega^r = \Sigma^r \Omega^r \to \id$$ and $$\nu : \Sigma^r C \Omega^r \to  \Sigma^r C C \Omega^r = \Sigma^r C \id C \Omega^r \to \Sigma ^r C \Omega^r  \Sigma^r C \Omega^r $$ which would need to satisfy the following diagrams to be a comonad

\begin{align}
\xymatrix{
\Sigma^r C \Omega^r \ar[r]^{\nu} \ar[d]^{\nu} & (\Sigma^r C \Omega^r)^2 \ar[d]^{\nu \id} & \Sigma^r C \Omega^r \ar[r]^{\nu} \ar[d]^{\nu} \ar[dr]^{\id} & \Sigma^r C \Omega^r \Sigma^r C \Omega^r \ar[d]^{\epsilon \id}\\
(\Sigma^r C \Omega^r)^2 \ar[r]^{\id \nu} & (\Sigma^r C \Omega^r)^3 &\Sigma^r C \Omega^r \Sigma^r C \Omega^r \ar[r]^{\id \epsilon} & \Sigma^r C \Omega^r
}
\end{align}

The left hand diagram, which represents coassociativity, commutes because the corresponding coassociativity diagrams commute for the comonads $ \Sigma^r\Omega^r$ and $C$, and because the comultiplication on $C$ and the unit on $ \Omega^r\Sigma^r$ can be done in any order, since they are disjoint.

The right hand diagram works out the same way, it comes from the corresponding counit diagrams and the fact that the maps interact nicely.

However, fibrant replacement, $F$, is a monad and therefore doesn't play as nicely, which means that we won't get a proper comonad. Though it will still have the appropriate properties to allow us to apply the bar construction, which is what we need. From here on to save notation, we will write $K:= \Sigma^r C\Loop^r$ and $\tK:=KF$.

The comultiplication map is defined as usual, \begin{align}\tilde{\nu}: KF \xrightarrow{\nu \id}  KKF \xrightarrow{\id \epsilon \id}  KFKF\end{align} but the counit map now has to be of the form \begin{align}\tilde \epsilon :KF \xrightarrow{\epsilon \id} F\end{align} and the diagrams that this satisfies are 

\begin{align}
\xymatrix{
\tK \ar[r]^{\tilde \nu} \ar[d]^{\tilde \nu} & \tK \tK \ar[d]^{\tilde \nu \id} & \tK \ar[r]^{\tilde \nu} \ar[dr]^{\id} & \tK \tK \ar[d]^{(*)} & F \tK \ar[r]^{\id \tilde \nu}  \ar[dr]^{\id} & F\tK \tK \ar[d]^{(**)}\\
\tK \tK \ar[r]^{\id \tilde \nu} & \tK \tK \tK & & \tK& &F \tK
}
\end{align}
where $(*)$ is the composite $KFKF \xrightarrow{\id^2 \tilde \epsilon } KFF \xrightarrow{\id \mu} KF$ and $(**)$ is $FKFKF \xrightarrow{\id \tilde \epsilon \id^2} FFKF \xrightarrow{\mu \id^2} FKF$.

Cossociativity still works as usual, as does the right counit map. But the left counit map requires an extra $F$ be carried around. But all three of these diagrams commute by similar arguments to the corresponding diagrams for $K$.

\begin{defn}
A homotopy $\tK$-coalgebra $Y$ is an $\capO$-algebra along with a map $m: Y \to \tK Y$ satisfying the following commutative diagrams:
\begin{align}
\xymatrix{
Y \ar[r]^{m} \ar[d]^{m} & \tK Y \ar[d]^{\tilde \nu \id} & FY \ar[r]^{\id m} \ar[dr]^{\id} &  F \tK Y \ar[d]^{(***)} \\
\tK Y \ar[r]^{ \id m} & \tK \tK Y & &  FY
}
\end{align}
where $(***)$ is the composite $FKFY \xrightarrow{\id \tilde \epsilon \id} FFY \xrightarrow{\mu \id} FY$.
\end{defn}

\begin{rem} \label{frakC}
It's easy to see that $C \Loopt^r$ has a right $\tK$-action, and so therefore for $Y$ a $\tK$-coalgebra, we can define the cobar construction as usual, with $\Cobar(C\Loopt^r , \tK, Y)_n=C\Loopt^r (\tK)^nY$, and all of the cosimplicial relations are still satisfied. We use the notation $\mathfrak{C}(Y):=\Cobar(C\Loopt^r, \tK, Y)$. Also, for an $\capO$-algebra $X$, $\Sigmat^rX$ is a homotopy $\tK$-coalgebra, and furthermore the Freudenthal map augments $\mathfrak{C}(\Sigmat^rX)$, giving us the resolution \ref{eq:freudenthal_resolution}.
\end{rem}

\subsection{Homotopical Monad}

What follows is the dual version of the previous section, to establish the required bar construction.

Again the first step is to show that $\Omega^r F \Sigma^r$ is a monad. Given the adjunction \ref{adjunction}, we have a unit and counit map $$ \id \to \Omega^r \Sigma^r \quad \mathrm{and} \quad \Sigma^r \Omega^r \to \id$$ and the fibrant replacement functor $F$ is a monad, so has maps $$ \id \to F \quad \mathrm{and} \quad F F \to F$$ Therefore we can define $$\eta : \id \to \Omega^r \Sigma^r = \Omega^r \id \Sigma^r \to \Omega^r F \Sigma^r$$ and $$\mu : \Omega^r F \Sigma^r \Omega^r F \Sigma^r \to \Omega^r F \id F \Sigma ^r = \Omega^r F F \Sigma^r \to \Omega^r F \Sigma^r$$ which would need to satisfy the following diagrams to be a monad

\begin{align}
\xymatrix{
(\Omega^r F \Sigma^r)^3 \ar[r]^{\id \mu} \ar[d]^{\mu \id} & (\Omega^r F \Sigma^r)^2 \ar[d]^{\mu} & \Omega^r F \Sigma^r \ar[r]^{\id \eta} \ar[d]^{\eta \id} \ar[dr]^{\id} & \Omega^r F \Sigma^r \Omega^r F \Sigma^r \ar[d]^{\mu}\\
(\Omega^r F \Sigma^r)^2 \ar[r]^{\mu} & \Omega^r F \Sigma^r &\Omega^r F \Sigma^r \Omega^r F \Sigma^r \ar[r]^{\mu} & \Omega^r F \Sigma^r
}
\end{align}

The left hand diagram, which gives associativity, commutes because the corresponding associativity diagrams commute for the monads $\Omega^r \Sigma^r$ and $F$, and because the multiplication on $F$ and the counit on $\Sigma^r \Omega^r$ can be done in any order, since they are disjoint.

The right hand diagram works out the same way, it comes from the corresponding unit diagrams and the fact that the maps interact nicely.

However, cofibrant replacement, $C$, is a comonad and therefore doesn't play as nicely, which means that we won't get a proper monad. This is alright though, it will still have the appropriate properties to allow us to apply the bar construction. From here on to save notation, we will write $T:=\Loopt^r \Sigma^r$ and $\tT:=TC$.

Our multiplication map is defined as usual, \begin{align} \tilde{\mu}: T C TC \xrightarrow{\id \epsilon \id}  T T C \xrightarrow{\mu \id}  T C\end{align} but the unit map now has to be of the form \begin{align}\tilde \eta :C \xrightarrow{\eta \id} T C\end{align} and the diagrams that this satisfies are 

\begin{align}
\xymatrix{
\tT \tT \tT \ar[r]^{\id \tilde \mu} \ar[d]^{\tilde \mu \id} & \tT \tT \ar[d]^{\tilde \mu} & \tT \ar[r]^{(*)} \ar[dr]^{\id} & \tT \tT \ar[d]^{\tilde \mu} & C \tT \ar[r]^{ (**)}  \ar[dr]^{\id} & C\tT \tT \ar[d]^{\id \tilde \mu}\\
\tT \tT \ar[r]^{ \tilde \mu} & \tT & & \tT& &C \tT
}
\end{align}
where $(*)$ is the composite $TC \xrightarrow{\id \nu} TCC \xrightarrow{\id \tilde \eta \id} TCTC$ and $(**)$ is $CTC \xrightarrow{\nu \id} CCTC \xrightarrow{\id \eta \id} CTCTC$.

Associativity still works as usual, as does the right unit map. But the left unit map requires an extra $C$ be carried around. But all three of these diagrams commute by similar arguments to the corresponding diagrams for $T$.

\begin{defn}
A homotopy $\tT$-algebra $X$ is an $\capO$-algebra along with a map $m:\tT X \to X$ satisfying the following commutative diagrams:
\begin{align}
\xymatrix{
\tT \tT X \ar[r]^{\id m} \ar[d]^{\tilde \mu \id} & \tT X \ar[d]^{m} & CX \ar[r]^{\nu \id} \ar[drr]^{\id} & CCX \ar[r]^{\id \tilde \eta \id} &  C \tT X \ar[d]^{\id m} \\
\tT X \ar[r]^{ m} & X & & &  CX
}
\end{align}
\end{defn}

\begin{rem} \label{frakB}
It's easy to see that $F \Sigmat^r$ has a right $\tT$-action, and so therefore for $X$ a $\tT$-algebra, we can define the bar construction as usual, with $\BAR(F\Sigmat^r , \tT, X)_n=F\Sigmat^r (\tT)^nX$, and all of the simplicial relations are still satisfied. We use the notation $\mathfrak{B}(X):=\BAR(F\Sigma^r C, \tT, X)$. Also, for an $\capO$-algebra $Y$, $\Loopt^rY$ is a homotopy $\tT$-algebra, and furthermore the dual Freudenthal map coaugments $\mathfrak{B}(\Loopt^rY)$, giving us the resolution \ref{eq:dual_freudenthal_resolution}.
\end{rem}

\subsection{Homotopy Theory of $\tT$-Algebras}

We will be defining the homotopy theory explicitly as an equivalence relation on a set of maps. This section is dual to \cite{Arone_Ching_classification}. The homotopy theory used for the $\tK$-coalgebras is completely analogous to \cite[Section 4]{Ching_Harper_derived_Koszul_duality}

\begin{defn}
A map of $\tT$-Algebras $X$, $Y$, is a map $f:X \to Y$ in $\AlgO$ such that the following diagram commutes
\begin{align}
\xymatrix{
\tT X \ar[r]^{m_X} \ar[d]^{\id f} & X \ar[d]^{f}  \\
\tT Y \ar[r]^{ m_Y} & Y
}
\end{align}
\end{defn}

\begin{defn}
It is easy to see that the set of maps $F:X \to Y$ that make the above diagram commute is isomorphic to the equalizer 
\begin{align}
\xymatrix{
 \lim( \hom_\AlgO (X,Y) \ar@<0.5ex>[r]^-{d_0}\ar@<-0.5ex>[r]_-{d_1} &  \hom_\AlgO (\tT X, Y))
}
\end{align}
where $d_0(f)=f \circ m_X$ and $d_1(f)=m_Y \circ \tT (f)$. We will call this $\hom_{\tT }(X,Y)$.
\end{defn}

\begin{rem}
By cofinality, this is equivalent to taking the limit over the whole cosimplicial set $\hom_\AlgO (C(\tT )^\bullet X, Y)$ with the extra cofibrant replacement there to make it homotopically meaningful. 
\end{rem}

Now to turn this into a homotopy class of maps, we need to derive it. We are able to understand our set of $\tT $-algebra maps as $\hom_{c\Space}(*, \Hombold(C(\tT )^\bullet A, A'))$, and therefore to get a homotopically meaningful mapping set we can just fatten our point, which is to say take the restricted totalization. Though for fibrancy reasons it will be helpful to work in $\CGHaus$. For this reason we recall the realization functor, $|-|:\Space \to \CGHaus$, and the Mapping space in $\CGHaus$ given by $$\Map_\AlgO(C(\tT )^\bullet A, A'):=|\Hombold_\AlgO(C(\tT )^\bullet A, A')|$$ This motivates the following definition.

\begin{defn}
Let $A$ and $A'$ be fibrant $\tT $-algebras, the mapping spaces of derived $\tT $-algebra maps are defined as the restricted totalizations $$\Hombold_{\Alg_{\tT }} (A, A'):=\Tot^\res \Hombold_\AlgO(C(\tT )^\bullet A, A')$$ $$\Map_{\Alg_{\tT }} (A, A'):=\Tot^\res \Map_\AlgO(C(\tT )^\bullet A, A')$$ 
\end{defn}

\begin{prop}
For $A$ and $A'$ fibrant $\tT$-algebras, the natural map $$|\Hombold_{\Alg_\tT}(A, A')| \to \Map_{\Alg_\tT}(A, A')$$ is a weak equivalence.
\end{prop}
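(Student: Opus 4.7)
The plan is to unpack both sides using the definitions and reduce to showing that $|-|\colon\sSet\to\CGHaus$ commutes, up to weak equivalence, with $\Tot^{\res}$ applied to the cosimplicial simplicial set $\Hombold_\AlgO(C(\tT)^\bullet A, A')$. After this unpacking, the asserted map is the canonical natural comparison
\begin{align*}
\bigl|\Tot^{\res}\Hombold_\AlgO(C(\tT)^\bullet A, A')\bigr| \longrightarrow \Tot^{\res}\bigl|\Hombold_\AlgO(C(\tT)^\bullet A, A')\bigr|
\end{align*}
induced by the universal property of the limit, so the entire proposition is really a claim that geometric realization commutes with restricted totalization on this particular cosimplicial object.

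I would attack this in two steps. First, write $\Tot^{\res} Z^\bullet = \lim_n \Tot^{\res,\le n} Z^\bullet$ as a sequential inverse limit of finite partial restricted totalizations, where each $\Tot^{\res,\le n}$ is a finite end indexed on the semisimplicial truncation $\Delta_{\mathrm{inj}}^{\le n}$. Because $|-|\colon\sSet\to\CGHaus$ preserves finite limits (it preserves products by Milnor's theorem and equalizers by inspection, when the target is compactly generated), the canonical comparison
\begin{align*}
\bigl|\Tot^{\res,\le n} Z^\bullet\bigr| \longrightarrow \Tot^{\res,\le n}|Z^\bullet|
\end{align*}
is a homeomorphism at every finite level. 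This yields a levelwise equivalent pair of towers in $\CGHaus$ and reduces the proposition to showing that $|-|$ commutes, up to weak equivalence, with the inverse limit of this tower.

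Second, I would verify that the cosimplicial simplicial set $\Hombold_\AlgO(C(\tT)^\bullet A, A')$ is Reedy fibrant with respect to the restricted (semisimplicial) Reedy structure on $\Delta_{\mathrm{inj}}$, using that $A'$ is fibrant and that $C(\tT)^\bullet A$ is Reedy cofibrant as a consequence of $C$ being a cofibrant replacement comonad. Reedy fibrancy forces the tower maps $\Tot^{\res,\le n+1}\to\Tot^{\res,\le n}$ to be Kan fibrations between Kan complexes, so the classical Milnor $\lim^1$ argument shows that $|-|$ commutes up to weak equivalence with the inverse limit of such a tower; combined with the levelwise homeomorphism from the first step this produces the desired weak equivalence. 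The main obstacle is the Reedy fibrancy verification, since the remaining reductions are formal.
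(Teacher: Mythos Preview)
Your proposal is correct. The paper's own proof is a one-line citation: it invokes the general weak equivalence $|\Tot^{\res} Y|\xrightarrow{\simeq}\Tot^{\res}|Y|$ for objectwise fibrant $Y\in(\sSet)^{\Delta_{\res}}$, established in \cite[Prop.~6.14]{Ching_Harper_derived_Koszul_duality}, and observes that the proposition is an instance of this. What you have written is essentially a sketch of how one proves that cited result, so the two arguments agree in substance; you are just unpacking the black box.

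One remark on what you flag as the ``main obstacle.'' The Reedy fibrancy verification you anticipate is in fact trivial here: $\Delta_{\res}=\Delta_{\mathrm{inj}}$ is a direct Reedy category (all non-identity morphisms raise degree), so the matching objects are terminal and Reedy fibrancy coincides with objectwise fibrancy. Objectwise fibrancy of $\Hombold_{\AlgO}(C(\tT)^\bullet A,A')$ is immediate from $A'$ fibrant and each $C(\tT)^n A$ cofibrant. This is exactly the hypothesis in the paper's citation, and it is why only ``objectwise fibrant'' appears there. With that observed, your tower-of-fibrations and $\lim^1$ reduction goes through without further work.
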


\begin{proof}
This is an immediate consequence of the weak equivalence $$|\Tot^\res Y| \xrightarrow{\simeq} \Tot^\res |Y|$$ for objectwise fibrant $Y \in (\Space)^{\Delta_\res}$ which is proved in \cite[Prop 6.14]{Ching_Harper_derived_Koszul_duality}.
\end{proof}

It becomes helpful to have a language for working with the spaces of derived $\tT$-algebra maps, which the following definition provides.

\begin{defn}
For $A$ and $A'$ fibrant $\tT$-algebras, a derived $\tT $-algebra map $f: A \to A'$ is of the form $$f:\Delta[-] \to \Hombold_\AlgO (C(\tT )^\bullet A, A')$$ in $(\Space)^{\Delta_\res}$ and a topological derived $\tT $-algebra map $g: A \to A'$ is of the form $$g:\Delta^\bullet \to \Map_\AlgO (C(\tT )^\bullet A, A')$$ in $(\CGHaus)^{\Delta_\res}$. For a derived $\tT$-algebra map $f$ we refer to the map $f_0:CA \to A'$ which corresponds to $f_0: \Delta[0] \to \Hombold_\AlgO(CA, A')$ as the underlying map of $f$.
\end{defn}

\begin{rem}
Every derived $\tT$-algebra map $f$ determines a topological derived $\tT$-algebra map by realization.
\end{rem}


These mapping spaces aren't associative on the nose unfortunately, but they do have a highly homotopy coherent associativity structure. An important tool in realizing this structure is the box product. Recall the definition of the box product of two cosimplicial objects:

\begin{defn}
Let $(\M , \otimes )$ be a closed symmetric monoidal category. For $X,Y \in \M^\Delta$, the box product is defined \begin{align}
\xymatrix{
(X \Box Y )^n := \colim \Bigg( \displaystyle \coprod_{p+q=n-1} X^p \otimes Y^q \ar@<0.5ex>[r] \ar@<-0.5ex>[r] & \displaystyle \coprod_{p+q=n} X^p \otimes Y^q \Bigg)
}
\end{align}
where the two maps are $(d^{p+1}, \id)$ and $(\id, d^0)$ respectively.
\end{defn}

\begin{rem}
Because realization commutes with finite products and small colimits, for two cosimplicial spaces $X$ and $Y$ there is a natural isomorphism $|X \Box Y| \iso |X| \Box |Y|$ in $(\CGHaus)^{\Delta}$.
\end{rem}

\begin{defn}
For $A$, $A'$, and $A''$ $\tT $-algebras we define a composition map on the cosimplical spaces level $$\mu : \Hombold_\AlgO (C(\tT )^\bullet A, A') \Box \Hombold_\AlgO(C(\tT )^\bullet A', A'') \to \Hombold_\AlgO(C(\tT )^\bullet A, A'')$$ via the composites $$\xymatrix{ \Hombold_\AlgO (C(\tT )^n A, A') \times \Hombold_\AlgO(C(\tT )^m A', A'') \ar[d] \\ \Hombold_\AlgO (C(\tT )^m C(\tT )^n A, C(\tT )^m A') \times \Hombold_\AlgO (C(\tT )^m A', A'') \ar[d] \\ \Hombold_\AlgO(C(\tT )^{m+n} A, C(\tT )^m A') \times \Hombold_\AlgO(C(\tT )^m A', A'') \ar[d] \\ \Hombold_\AlgO (C(\tT )^{m+n} A, A'')}$$ and an identity map $$\iota : * \to \Hombold_\AlgO(C(\tT )^\bullet A, A)$$ by $$\Hombold_\AlgO(A, A) \to \Hombold_\AlgO(C(\tT )^nA, C(\tT )^nA) \to \Hombold_\AlgO (C(\tT )^n A, A)$$ where the latter map is a composition of $A$'s algebra structure map and the counit for $C$. We define corresponding maps $$\mu : \Map_\AlgO (C(\tT )^\bullet A, A') \Box \Map_\AlgO(C(\tT )^\bullet A', A'') \to \Map_\AlgO(C(\tT )^\bullet A, A'')$$ and $$\iota : * \to \Map_\AlgO(C(\tT )^\bullet A, A)$$ via realization.
\end{defn}

\begin{defn}
The coendomorphism operad $A$ is the (non-symmetric) operad in $\CGHaus$ defined objectwise by $$A(n):= \Map_{\Delta_\res}(\Delta^\bullet, (\Delta^\bullet)^{\Box n})$$
\end{defn}

\begin{prop}
For $n \ge 0$, $A_0,...A_n$ fibrant $\tT$-algebras the collection of maps $$A(n) \times \Map_{\Alg_\tT}(A_0,A_1) \times \cdots \times \Map_{\Alg_\tT}(A_{n-1},A_n)  \to \Map_{\Alg_\tT}(A_0,A_n)$$ induced by iterating $\mu$ determine a topological-$A_\infty$-category with objects the fibrant $\tT$-algebras and morphisms given by $\Map_{\Alg_\tT} (A, A')$.
\end{prop}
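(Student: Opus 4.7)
The plan has two stages. First, I would verify that the cosimplicial maps $\mu$ and $\iota$ assemble the collection $\Map_\AlgO(C(\tT)^\bullet A, A')$ into a strictly associative, unital category-object in the monoidal category of cosimplicial spaces with box product. Second, I would invoke the standard coendomorphism-operad formalism to transport this strict cosimplicial structure, via $\Tot^\res$, to an $A$-action on the topological mapping spaces $\Map_{\Alg_\tT}(A, A')$.

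For the first stage, strict associativity of $\mu$ is a careful but routine diagram chase: given composable cosimplicial elements $f\colon C(\tT)^n A_0 \to A_1$, $g\colon C(\tT)^m A_1 \to A_2$, $h\colon C(\tT)^k A_2 \to A_3$, both $\mu(\mu(f,g),h)$ and $\mu(f,\mu(g,h))$ should unfold to the same composite, provided one has checked coherence of the comparison maps $C(\tT)^{a+b} \to C(\tT)^a C(\tT)^b$ (built from the coassociativity of $C$ and the mixing of $C$ with $\tT$) against functoriality of $C(\tT)^a$. Strict unitality of $\iota$ reduces to the commutative diagrams in the definition of homotopy $\tT$-algebra; indeed those axioms were designed precisely so that the iterated structure map $C(\tT)^n A \to A$ is a strict two-sided unit for the cosimplicial composition.

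For the second stage, iterating $\mu$ yields strictly coherent $n$-fold composition maps $\mu^{(n)}$. Given $\sigma \in A(n) = \Map_{\Delta_\res}(\Delta^\bullet, (\Delta^\bullet)^{\Box n})$ and derived maps $f_i\colon \Delta^\bullet \to \Map_\AlgO(C(\tT)^\bullet A_{i-1}, A_i)$, the $A(n)$-action is the composite
\begin{align*}
\Delta^\bullet \xrightarrow{\sigma} (\Delta^\bullet)^{\Box n} \xrightarrow{f_1 \Box \cdots \Box f_n} \Map_\AlgO(C(\tT)^\bullet A_0, A_1) \Box \cdots \Box \Map_\AlgO(C(\tT)^\bullet A_{n-1}, A_n) \xrightarrow{\mu^{(n)}} \Map_\AlgO(C(\tT)^\bullet A_0, A_n),
\end{align*}
which lies in $\Map_{\Alg_\tT}(A_0, A_n)$. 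The operad action axioms---unit axiom (from $\id_{\Delta^\bullet} \in A(1)$ together with $\iota$), equivariance, and compatibility with the operadic composition of $A$---all reduce formally to strict associativity and unitality of $\mu$ together with the fact that the operad structure on $A$ is itself defined by box-product composition of cosimplicial maps out of $\Delta^\bullet$.

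The main obstacle is checking that $\iota$ is a strict two-sided unit for $\mu$ at the cosimplicial level. The subtlety is that the constituents of $\tT = \Loopt^r\Sigma^r C$ only form homotopy (co)monads once fibrant replacement $F$ and cofibrant replacement $C$ are woven in; this is precisely why the definition of homotopy $\tT$-algebra in the preceding subsection includes the extra coherence diagrams carrying along an $F$ or a $C$. Those diagrams are exactly what is needed to promote $\iota$ to a strict unit on the nose at the cosimplicial level. Once this verification is in hand, the passage from strict composition on cosimplicial spaces to an $A$-algebra structure on $\Tot^\res$ is entirely formal, and parallels the dual coalgebra treatment of \cite{Ching_Harper_derived_Koszul_duality} and the setup of \cite{Arone_Ching_classification}.
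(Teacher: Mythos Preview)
The paper does not supply a proof of this proposition; it is stated without argument, with the surrounding section noting that ``This section is dual to \cite{Arone_Ching_classification}'' and that the coalgebra side is ``completely analogous to \cite[Section 4]{Ching_Harper_derived_Koszul_duality}.'' Your proposal follows exactly the expected template from those references: establish that $\mu$ and $\iota$ make the cosimplicial mapping objects into a strict monoid under $\Box$, then pull back along elements of $A(n)=\Map_{\Delta_\res}(\Delta^\bullet,(\Delta^\bullet)^{\Box n})$ and use $\Tot^\res$ to obtain the $A_\infty$ structure on the topological mapping spaces. That is the right two-stage decomposition, and your identification of the key verification---that the homotopy $\tT$-algebra coherence diagrams force $\iota$ to be a strict unit at the cosimplicial level---is the genuine content.

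One caution: in your first stage you speak of ``coherence of the comparison maps $C(\tT)^{a+b}\to C(\tT)^a C(\tT)^b$,'' but in the paper's conventions the composition $\mu$ goes via $C(\tT)^m C(\tT)^n \to C(\tT)^{m+n}$, built from the counit $C\to\id$ (collapsing the intermediate $C$), not from a comultiplication. Make sure your associativity check is tracking the correct direction; the argument is still routine, but the diagram you chase is slightly different from the one you wrote down.
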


We then define the homotopy category by taking connected components.

\begin{defn}
The homotopy category of $\tT $-algebra spectra, denoted $\Ho(\Alg_{\tT })$, is the category with objects the fibrant $\tT $-algebras and for $A$ and $A'$ fibrant $\tT $-algebras, morphisms defined by $$[A, A']_{\tT }:=\pi_0 \Map_{\Alg_{\tT }}(A, A')$$
\end{defn}

\begin{prop}
A derived $\tT $-algebra map $f: A \to A'$ represents an isomorphism in the homotopy category iff the underlying map $f_0:CA \to A'$ is a weak equivalence.
\end{prop}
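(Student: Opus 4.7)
My plan is to prove the two directions separately, in both cases unwinding what the $A_\infty$-composition $\mu$ does at the underlying map level, since strict composition is not available.

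For the forward direction, suppose $f$ is invertible in $\Ho(\Alg_{\tT})$, with inverse represented by a derived $\tT$-algebra map $g:A'\to A$. Then $gf$ and $\iota_A$, and likewise $fg$ and $\iota_{A'}$, lie in the same path components of $\Map_{\Alg_{\tT}}(A,A)$ and $\Map_{\Alg_{\tT}}(A',A')$ respectively. Restricting attention to the underlying map (evaluation at $\Delta[0]$, which defines a continuous map to $\Map_\AlgO(CA,A)$), we see that $(gf)_0$ and $(\iota_A)_0$ are homotopic in $\Map_\AlgO(CA,A)$. Unwinding the definition of $\mu$ at bi-level $(0,0)$ identifies $(gf)_0$ with the composite
\begin{equation*}
  CA \xrightarrow{\nu_A} CCA \xrightarrow{Cf_0} CA' \xrightarrow{g_0} A,
\end{equation*}
while the definition of $\iota$ yields $(\iota_A)_0 = \epsilon_A : CA \to A$, the counit of $C$, which is a weak equivalence. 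Since $\epsilon_{CA}\circ \nu_A = \id_{CA}$ and $\epsilon_{CA}$ is a weak equivalence, so is $\nu_A$; and $C$ preserves weak equivalences. Applying $\pi_*$ and using the isomorphisms $\pi_*(CX)\cong \pi_*(X)$ provided by the counit of $C$, we obtain $\pi_*(g_0)\circ \pi_*(f_0) = \id_{\pi_*(A)}$. The symmetric argument applied to $fg\sim \iota_{A'}$ yields $\pi_*(f_0)\circ \pi_*(g_0) = \id_{\pi_*(A')}$, so $\pi_*(f_0)$ is an isomorphism and $f_0$ is a weak equivalence.

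For the converse, suppose $f_0:CA\to A'$ is a weak equivalence. By the Yoneda lemma in $\Ho(\Alg_{\tT})$, it suffices to show that postcomposition with $f$ induces a weak equivalence $f_*:\Map_{\Alg_{\tT}}(B,A)\to \Map_{\Alg_{\tT}}(B,A')$ for every fibrant $\tT$-algebra $B$. This map is the restricted totalization of a map of Reedy fibrant cosimplicial spaces whose $n$-th level, after unraveling $\mu$ as above, is (up to the weak equivalence $\epsilon_A:CA\to A$) the post-composition map
\begin{equation*}
  \Map_\AlgO(C\tT^n B,A)\longrightarrow \Map_\AlgO(C\tT^n B,A')
\end{equation*}
induced by $f_0$. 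Since $C\tT^n B$ is cofibrant and $A,A'$ are fibrant, each level is a weak equivalence; by homotopy invariance of $\Tot^{\res}$ on Reedy fibrant diagrams the induced map on totalizations is a weak equivalence. Taking $\pi_0$ and invoking Yoneda concludes that $f$ is an isomorphism in $\Ho(\Alg_{\tT})$.

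The main obstacle in both directions is the careful identification of what $\mu$ produces at the underlying map level. Tracing through its definition shows that all the required discrepancies from strict composition are absorbed by the comultiplication $\nu$ on $C$ and the counit $\epsilon$, each of which is a weak equivalence, so everything degenerates on passage to homotopy. This bookkeeping is dual to the analysis carried out for coalgebras in \cite{Ching_Harper_derived_Koszul_duality}, and I would model it on that reference.
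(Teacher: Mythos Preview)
The paper states this proposition without proof, deferring to the dual coalgebra versions in \cite{Arone_Ching_classification} and \cite[Section~4]{Ching_Harper_derived_Koszul_duality}, so there is no in-paper argument to compare against. Your forward direction is correct: the bidegree-$(0,0)$ component of $\mu$ really does send $(f_0,g_0)$ to $g_0\circ Cf_0\circ\nu_A$, the definition of $\iota$ gives $(\iota_A)_0=\epsilon_A$, and the $\pi_*$ bookkeeping is routine.

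The backward direction has a genuine gap. You assert that $f_*$ is the restricted totalization of a levelwise cosimplicial map which, up to $\epsilon_A$, is post-composition with $f_0$. But the cosimplicial object $\Map_\AlgO(C\tT^\bullet B,A)$ has one extremal coface built from the $\tT$-algebra structure map $m_A$, and the assignment $\phi\mapsto f_0\circ C\phi\circ\nu$ does not strictly intertwine this coface with the corresponding one for $A'$ built from $m_{A'}$; that compatibility holds only up to the homotopy recorded by $f_1$, and the higher cosimplicial identities only up to the homotopies recorded by $f_2,f_3,\ldots$. So your candidate levelwise map is not a map of cosimplicial objects, and the $A_\infty$-composition $\mu(a;-,f)$ is not $\Tot^\res$ of any strict cosimplicial map. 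The argument in the cited references repairs this by using all of $f$ through the box product: $\id\Box f$ followed by $\mu$ gives an honest cosimplicial map out of $\Map_\AlgO(C\tT^\bullet B,A)\Box\Delta^\bullet$, and one then analyzes separately the effect of boxing with $\Delta^\bullet$ on $\Tot^\res$. Your Yoneda strategy is sound, but you have elided exactly the step where the higher coherence data of $f$ enters.
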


\section{Stable Estimates}

By the same methods as above, these techniques allow an alternate proof to \cite[Thm 1.11]{Ching_Harper_derived_Koszul_duality}. All we need is an analog of Theorem \ref{hfsrs}, given below, to get our connectivity estimates started and the rest goes through identically.

\begin{thm}[Higher Hurewicz for Structured Ring Spectra]
Let $\mathcal{X}$ be a $W$-cube in $\mathcal{O}$-algebras, $|W|=n$. For $k\ge 0$ if $\mathcal{X}$ is $(\id+1)(k+1)$-cartesian, then the $(n+1)$-cube $\mathcal{X} \to QU\mathcal{X}$ is $(\id+1)(k+1)$-cartesian.
\end{thm}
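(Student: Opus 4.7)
The plan is to mimic the proof of Theorem \ref{hfsrs} almost verbatim, replacing the iterated loop-suspension unit $\id \to \Loopt^r\Sigmat^r$ by the Hurewicz-style unit $\id \to QU$, and substituting the Hurewicz theorem for structured ring spectra \cite[Thm 1.11]{Ching_Harper_derived_Koszul_duality} for the Freudenthal suspension theorem \ref{fsrs} in the base case. We induct on $n=|W|$; when $n=0$ this is precisely the Hurewicz theorem for structured ring spectra, which gives $(2k+2)$-connectivity of $X \to QUX$ whenever $X$ is $k$-connected.

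For the inductive step, I would again replace $\mathcal{X}$ by a cofibration cube, set $C := thocofib(\mathcal{X})$, let $\mathcal{C}$ be the $W$-cube with $\mathcal{C}|_\emptyset = C$ and $\mathcal{C}|_T = *$ otherwise, and analyze the square of cubes
\begin{align*}
\xymatrix{
  \mathcal{X} \ar[r] \ar[d] & QU\mathcal{X} \ar[d] \\
  \mathcal{C} \ar[r] & QU\mathcal{C}
}
\end{align*}
to deduce the cartesianness of the top cube from the other three. The input hypothesis $(\id+1)(k+1)$-cartesianness translates to $((\id+1)(k+2)-2)$-cocartesianness by Proposition \ref{blah}, so $C$ is $((n+1)(k+2)-2)$-connected, and the Hurewicz theorem then bounds the connectivity of $C \to QUC$ by $(2(n+1)(k+2)-2)$, giving the constant cube $\mathcal{C} \to QU\mathcal{C}$ the requisite $(2(n+1)(k+2)-n-2)$-cartesianness; the estimate for $\mathcal{X} \to \mathcal{C}$ is identical to that in \ref{hfsrs} and yields $((n+2)(k+1))$-cartesianness by higher Blakers-Massey. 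Finally, for $QU\mathcal{X} \to QU\mathcal{C}$ one needs a functorial statement that $QU$ preserves (or increases by a known amount) cocartesianness of cubes; one then feeds the resulting face-by-face cocartesian estimates back into higher Blakers-Massey to obtain enough cartesianness on $QU\mathcal{X} \to QU\mathcal{C}$ that the four-cube diagram computation yields $(n+2)(k+1)$-cartesianness of the top cube, and repeating the argument on all faces completes the induction.

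The main obstacle is the final step, namely controlling $QU\mathcal{X} \to QU\mathcal{C}$. In the Freudenthal analog this was easy because $\Sigmat^r$ is a left Quillen functor that simply shifts cocartesianness by $r$; for $QU$ one must invoke the corresponding stabilization estimate (e.g.\ that $Q$ preserves homotopy cocartesian cubes up to a known shift in connectivity, which follows from the TQ-homology computations in \cite{Ching_Harper_derived_Koszul_duality}). Once this input is in place, the numerology is exactly parallel to the proof of \ref{hfsrs}: all four cartesianness estimates exceed $(n+2)(k+1)$, so the standard total-fiber comparison of cubes in a square delivers the claimed $(\id+1)(k+1)$-cartesianness of $\mathcal{X} \to QU\mathcal{X}$.
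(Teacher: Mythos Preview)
Your overall architecture matches the paper's proof exactly: induct on $n$, use the base-case Hurewicz estimate, pass to a cofibration cube, introduce the total-cofiber cube $\mathcal{C}$, and analyze the square of $(n+1)$-cubes. Your estimates for $\mathcal{C}\to QU\mathcal{C}$ and for $\mathcal{X}\to\mathcal{C}$ are the same as the paper's (the paper cites \cite[1.8]{Harper_Hess} rather than \cite{Ching_Harper_derived_Koszul_duality} for the base case, but that is immaterial).

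The one place you diverge is exactly the step you flag as ``the main obstacle,'' and there the paper's argument is both simpler and stronger than the finite-shift estimate you propose. You try to carry over the $\Sigmat^r$ numerology, looking for a statement that $Q$ shifts cocartesianness by a controlled amount so that you can feed face-by-face estimates back into higher Blakers--Massey. The paper instead observes that no numerology is needed at all: since $\mathcal{X}\to\mathcal{C}$ is $\infty$-cocartesian and $Q$ is a left Quillen functor, $Q\mathcal{X}\to Q\mathcal{C}$ is $\infty$-cocartesian; but $Q$ lands in a \emph{stable} category, where cocartesian and cartesian coincide, so $Q\mathcal{X}\to Q\mathcal{C}$ is already $\infty$-cartesian; finally $U$ is a right adjoint and hence preserves cartesianness, so $UQ\mathcal{X}\to UQ\mathcal{C}$ is $\infty$-cartesian. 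This immediately dominates the $(n+2)(k+1)$ bound and the four-cube comparison goes through. So your perceived obstacle dissolves once you use stability of the target of $Q$ rather than trying to imitate the finite $\Sigmat^r$ shift.

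(Minor note: the unit of the $(Q,U)$ adjunction is $\id\to UQ$, and the paper's proof writes $UQ$ throughout; the $QU$ in the theorem statement is a typo you have propagated.)
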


\begin{proof}
We induct on $n$. When $n=0$ this follows from \cite[1.8]{Harper_Hess}.

For $n>0$, without loss of generality, we may assume $\mathcal{X}$ is a cofibration cube. To make sure things are homotopically meaningful we rely on implicit use of \cite[I.2.5]{Chacholski_Scherer}. Let $C:=thocofib(\mathcal{X})$ and let $\mathcal{C}$ be the $W$-cube defined as $\mathcal{C}|_\emptyset =C$ and $\mathcal{C}|_T=*$ for $T \neq \emptyset$. Consider the following diagram of cubes:
\begin{align}
\xymatrix{
  \mathcal{X} \ar[r] \ar[d] & UQ\mathcal{X} \ar[d] \\
  \mathcal{C} \ar[r] &UQ \mathcal{C}
}
\end{align}

We want to know the cartesianness of the top cube, and we can do this by studying the other three. First, since $\mathcal{X}$ is $(\id+1)(k+1)$-cartesian by assumption, it is $((\id+1)(k+2)-2)$-cocartesian by proposition \ref{blah}, and thus so is $\mathcal{C}$. Therefore, $C$ is $((n+1)(k+2)-2)$-connected, and so by \cite[1.8]{Harper_Hess}, $C \to UQ C$ is $(2(n+1)(k+2)-2)$-connected. Since the total fiber of $\mathcal{C} \to UQ \mathcal{C}$ is equivalent to the fiber of $\Loopt^n C \to \Loopt^{n} UQ C$, the $(n+1)$ cube is $(2(n+1)(k+2)-n-2)$-cartesian. Note that $2(n+1)(k+2)-n-2 = (2n+2)(k+1)+n > (n+2)(k+1)$.

Next, we consider $\mathcal{X} \to \mathcal{C}$. This is an infinitely cocartesian cube, and we also know that a $d$-dimensional face of $\mathcal{X}$ is $((d+1)(k+2)-2)$-cocartesian. So a $(d+1)$-dimensional face is $((d+2)(k+2)-2)$-cocartesian, and a $(d+1)$-dimensional face of $\mathcal{X} \to \mathcal{C}$ made up of a map of $d$-dimensional faces of $\mathcal{C}$ and $\mathcal{X}$ is $((d+2)(k+2)-1)$-cocartesian. The minimal partition will be any partition into two faces, so by higher Blakers-Massey for structured ring spectra $\mathcal{X} \to \mathcal{C}$ is $((d+1)(k+2)-1)+((n-d+1)(k+2))-n-1=(n+2)(k+1)$-cartesian.

Finally, to deal with $UQ \mathcal{X} \to UQ \mathcal{C}$, we first note that $Q$ preserves cocartesianness and lands in a stable category, so $Q\mathcal{X} \to Q\mathcal{C}$ is infinitely cocartesian and thus infinitely cartesian. Then $U$ preserves cartesianness, so applying $U$, $UQ \mathcal{X} \to UQ \mathcal{C}$ is infinitely-cartesian.

Therefore by \cite[3.9]{Ching_Harper} we conclude that $\mathcal{X} \to UQ\mathcal{X}$ is $(n+2)(k+1)$-cartesian. Repeating this argument on all faces completes the proof.
\end{proof}

\bibliographystyle{plain}
\bibliography{DualFreudenthalSuspension}

\end{document}